\mathchardef\mhyphen="2D
\newtheorem{theorem}{Theorem}
\theoremstyle{definition}
\theoremstyle{remark}
\newtheorem{remark}[theorem]{Remark}
\newtheorem{proposition}{Proposition}
\journal{Journal of Computational Physics}
\begin{document}

\begin{frontmatter}



\title{A stochastic branching particle method for solving non-conservative reaction--diffusion equations}


\author[1,2]{Liyao Lyu}
\ead{lyuliyao@math.ucla.edu}
\author[2,3]{Huan Lei}
\ead{leihuan@msu.edu}

\address[1]{{Department of Mathematics, University of California, Los Angeles}, 520 Portola Plaza, Los Angeles, CA, USA}

\address[2]{{Department of Computational Mathematics, Science \& Engineering, Michigan State University}, 428 S Shaw Ln, East Lansing, MI, USA}

\address[3]{{Department of Statistics \& Probability, Michigan State University}, 619 Red Cedar Road, East Lansing, MI, USA}

\begin{abstract}
We propose a stochastic branching particle-based method for solving nonlinear non-conservative advection–diffusion–reaction equations. 
The method splits the evolution into an advection–diffusion step, based on a linearized Kolmogorov forward equation and approximated by stochastic particle transport, and a reaction step implemented through a branching birth–death process that provides a consistent temporal discretization of the underlying reaction dynamics. This construction yields a mesh-free, nonnegativity-preserving scheme that naturally accommodates non-conservative systems and remains robust in the presence of singularities or blow-up. 
We validate the method on two representative two-dimensional systems: the Allen–Cahn equation and the Keller–Segel chemotaxis model. In both cases, the present method accurately captures nonlinear behaviors such as phase separation and aggregation, and achieves reliable performance without the need for adaptive mesh refinement.

\end{abstract}



\begin{keyword}


particle methods, stochastic branching, reaction-diffusion, chemotaxis, aggregation
\end{keyword}

\end{frontmatter}



\section{Introduction}
The advection--diffusion--reaction (ADR) type partial differential equations (PDEs) have attracted much interest in physics, biology, and engineering for modeling various phenomena relevant to chemical reactions, transport, and collective behaviors in biological systems \cite{hundsdorfer2013numerical}. Traditional grid-based methods, such as finite difference~\cite{hoff1978stability}, finite element~\cite{codina1998comparison}, and discontinuous Galerkin schemes~\cite{ayuso2009discontinuous,xia2009application}, have achieved great success in solving many of these problems. However, from a numerical perspective, an essential challenge in solving such PDEs lies in handling nonlinearities and singular structures in certain cases, e.g., the aggregation phenomena of the Keller--Segel (KS) model \cite{keller1970initiation} and the interfacial dynamics of the Allen--Cahn (AC) model \cite{Allen_Cahn_1979}.  
Mesh-based discretizations approximate solutions in function spaces that may not be well-suited for capturing the blow-up or highly localized behavior, often resulting in reduced accuracy or numerical instabilities, and requiring additional numerical treatments.

Particle-based approaches offer a natural alternative by representing the solutions as empirical measures, which are particularly effective for problems exhibiting concentration or blow-up phenomena. 
Existing particle methods, such as the particle-in-cell method~\cite{birdsall2018plasma, hockney2021computer} for solving the Vlasov--Poisson equation, are often based on well-established  underlying physical laws. In this setting, particle dynamics can be derived from the explicit micro-scale governing equations. In contrast, the explicit physical laws are often unknown for general non-conservative ADR type PDEs, where effective interactions may be highly nonlinear, system-specific, or even not well characterized. For example, in many biological and chemical systems such as chemotaxis, the micro-scale interactions remain poorly understood, which limits the applicability of traditional particle-based approaches.
To extend particle-based methods beyond physically motivated settings, blob methods~\cite{craig2016blob,carrillo2019blob,huang2017error} and semi-Lagrangian particle schemes~\cite{qiu2010conservative,qiu2011conservative,qiu2011positivity} have been developed for conservative PDEs.
These approaches are based on the mean-field perspective, in which the empirical measures of interacting particle systems converge to the solution of the original PDE. This mathematical framework establishes a rigorous link between the particle dynamics and the continuum equations and does not rely on explicit microscale physical laws. Despite these advances, there is still no unified particle-based framework for systematically discretizing nonlinear and non-conservative PDEs, especially in the presence of singularities, sharp interface or blow-up phenomena.

Machine learning-based numerical PDE algorithms, such as Deep Galerkin Methods~\cite{sirignano2018dgm}, Deep Ritz method~\cite{yu2018deep}, and Mixed Residual method~\cite{lyu2022mim}, have attracted significant attention in recent years. By leveraging the strong representability of neural networks, these methods are particularly promising for solving high-dimensional problems. In the context of particle-based approaches, coupling neural networks with the Feynman-Kac formula~\cite{han2024deep, hutzenthaler2021speed, nguwi2024deep,beck2021deep} or backward stochastic differential equations (SDEs) ~\cite{han2017deep, han2018solving, hure2020deep} has also emerged as a powerful strategy for solving a broad class of PDEs. Nevertheless, the approximation capability of neural networks is fundamentally grounded in the universal approximation theorem that targets continuous solutions on compact sets.
As a result, neural-network–based solvers may face intrinsic difficulties when dealing with nonlinear, non-conservative PDEs that develop singularities or blow-up phenomena, where discontinuities or concentration effects play a central role.

In this work, we propose a \textit{stochastic branching particle method} to address these challenges. 
At each time step, the original PDE is locally linearized and approximated by the Kolmogorov forward equation, which establishes a direct connection between the PDE and the stochastic particle dynamics.  Unlike approaches that rely on explicit physical laws or mean-field limit arguments, this framework establishes a systematic particle-based discretization applicable to general nonlinear and non-conservative PDEs. 
A key feature of the method is a particle-level birth–death mechanism, introduced to represent non-conservative reaction terms by allowing the number of particles to evolve consistently with the underlying reaction dynamics. The only step that invokes a continuity assumption is the interpolation of the empirical measure, which can be replaced by blob- or kernel-based representations. Consequently, the developed method does not fundamentally rely on continuity and is well-suited for problems with singularities or blow-up phenomena. 
From a numerical perspective, the proposed method is broadly compatible with existing stochastic interpretations of reaction–diffusion systems. For example, in the case of chemotaxis, it aligns with the mean-field derivation of the KS equation from moderately interacting particles developed by Stevens~\cite{Stevens_SIAM_2000}. However, our formulation is not restricted to this setting and applies generally to nonlinear ADR systems with non-conservative terms.


To illustrate the effectiveness of the proposed method, we consider two representative non-conservative reaction–diffusion systems: the AC equation for the phase separation with interfacial motion and nonlinear reactions,  
and the KS chemotaxis model for the aggregation-driven blow-up dynamics~\cite{keller1970initiation, patlak1953random}. 
While the AC equation serves to demonstrate the generality of our method, the KS model provides a more challenging test problem and is the primary focus of the present study. In particular, we consider the two-dimensional KS chemotaxis model as the theme problem, which takes the form 
\begin{equation}\label{equ:KS}
\begin{aligned}
\partial_t u - \nabla \cdot ( \nabla u - \chi(u) \nabla v) &= f_u(u,v), & \bx\in \mathbb{T}^2 , t\in(0,T), \\
\partial_t v - \Delta v &= f_v(u,v), & \bx\in \mathbb{T}^2 , t\in(0,T),
\end{aligned}
\end{equation}
where $u$ and $v$ represent the cell density and chemical concentration, respectively. $\chi$ represents the chemotactic sensitivity, and $f_u$ and $f_v$ represent the production or decay rate.  For simplicity, we focus on periodic boundary conditions or unbounded domains, though the framework can be extended to more general boundary settings in future research.  
In two dimensions, it is well understood that both parabolic–elliptic and fully parabolic cases can exhibit finite-time blow-up to a Dirac-delta profile when the initial mass is sufficiently large~\cite{herrero1997blow, perthame2007transport}. Furthermore, in the three-dimensional parabolic–elliptic case, an alternative self-similar blow-up profile of the form $ C(T-t+\vert \mb x\vert ^2)^{-1}$ has also been identified in the literature~\cite{giga2011asymptotic,souplet2019blow}. 

On the numerical side, many notable methods have been developed for the KS systems. The finite-volume method~\cite{filbet2006finite,chertock2008second,chertock2018high} achieves efficient and positivity-preserving simulations for chemotaxis and haptotaxis models.  
Shen et al.~\cite{shen2020unconditionally} introduced an energy-dissipative and bound-preserving formulation, subsequently extended by Chen et al.~\cite{chen2022error} to a fully discrete finite element scheme for the 2D parabolic–elliptic KS, which reproduces finite-time blow-up under assumptions analogous to the continuous setting. 
Liu and Wang~\cite{liu2018positivity} reformulated the equations via the Le Ch\^atelier principle to construct positivity-preserving schemes. Discontinuous Galerkin methods have also been applied to KS systems~\cite{zhong2024direct, li2017local,epshteyn2009new}, in addition to various mesh-based finite element, finite volume, and domain-decomposition techniques for 3D fully parabolic models~\cite{epshteyn2019efficient, strehl2013positivity}. For particle-based approaches, the most relevant work is the recently proposed stochastic interacting particle–field algorithm~\cite{wang2025novel,hu2024stochastic,chertock2025hybrid}. However, this approach does not deal with the non-conservative equations at the particle level, limiting its applicability to the broader class of nonlinear non-conservative PDEs. 

We address this challenge by explicitly coupling a SDE-based advection--diffusion process with a particle-level birth--death mechanism for reactions, which overcomes the limitations of traditional methods and yields a particle measure discretization that remains robust near concentration and blow-up. We validate the method on two-dimensional KS benchmarks, including near blow-up and nonlinear growth, and compare with the standard grid-based method, observing the expected $1/\sqrt{N}$ scaling with the particle number.
The remainder of the paper is organized as follows: Section 2  and 3 present the stochastic branching method for a non-conservative ADR equation and the convergence analysis; Section 4 extends the approach for the KS systems; Section 5 reports numerical results and Section 6 discusses extensions and outlook.

\section{Stochastic branching method}
Let us start with the following scalar non-conservative advection-diffusion–reaction system on the $d$-dimensional torus $\mathbb{T}^d$ for a time $t \in (0, T]$:
\begin{subequations}\label{equ:reaction-diffusion}
\begin{align}
\partial_t u &= \nabla \!\cdot\! \big( a(u) u \big) + D \Delta u + r(u) u, & \text{for } (x, t) \in \mathbb{T}^d \times (0, T], \label{equ:pde} \\
u(x, 0) &= u_0(x), & \text{for } x \in \mathbb{T}^d. \label{equ:ic}
\end{align}
\end{subequations}
Here, $u(x,t)$ is the scalar field, $a(u): \mathbb{R}_{+} \to \mathbb{R}^d$ is the advective velocity, $D > 0$ is the diffusion coefficient, and $r(u): \mathbb{R}_{+} \to \mathbb{R}$ is the reaction rate. The system is initialized with $u_0(x)$. The extension to the full KS system~\eqref{equ:KS} is presented in Section~\ref{sec:extension}.

Let $S(N,t)=\{\bX^N_i\}_{i\in\mathcal I(N,t)}$ denote a set of particles at time $t$ whose initial population is $N$, where the particles' indices are numbered consecutively in the index set $\mathcal I(N,t)$ and $\bX^N_i(t)$ denote the position of the $i\mhyphen$th particle. The associated empirical (finite) measure is
\begin{equation}
    t \to \mu^N(t,\intd \bx) =  \frac{1}{N}\sum_{i\in \mathcal I(N,t)}\delta_{\bX^N_i(t)},
\end{equation}
where $\delta_\bx $ denotes the Dirac measure at $\bx\in \mathbb{T}^d$. The core idea of particle-based methods is to approximate the PDE solution by the limit of the empirical measure
\[
\mu(t,\mathrm{d}\bx) = \lim_{N\to\infty} \mu^N(t,\mathrm{d}\bx).
\]
Since Eq. \eqref{equ:reaction-diffusion} is non-conservative, the total mass of the system is not preserved. Consequently, both $\mu^N$ and its limit $\mu$ should be viewed as finite measures rather than probability measures.

\subsection{Initial condition and Normalization}
To connect the solution $u(t,\bx)$ to the measure $\mu(t,\mathrm{d}\bx)=\rho(t,\bx) \intd \bx$, we introduce a linear transformation between them. 
Without loss of generality, we assume that the initial measure $\mu(0,\cdot)$ is a probability measure with total mass $1$, and define
\begin{equation*}
    u(t,\bx)\,\mathrm{d}\bx = Z \,\mu(t,\mathrm{d}\bx),
\end{equation*}
where 
\[
Z = \int_{\Omega} u(0,\bx)\,\mathrm{d}\bx
\]
denotes the total initial mass.
Thus $\rho(0,\cdot)$ is a probability density
\[
\rho(0,\mathbf{x}) \;=\; \frac{u(0,\mathbf{x})}{Z},
\]
and we can sample the initial particle positions independently from $\rho(0,\cdot)$ (e.g., via Metropolis–Hastings) to form $S(N,0)=\{\mathbf{X}^i_0\}_{i=1}^N$. 
Substituting $u=Z\rho$ into~\eqref{equ:reaction-diffusion}  yields
\begin{equation}\label{equ:normalized_diffusion_reaction}
\partial_t \rho 
= \nabla \cdot \left( \tilde a(\rho)\rho \right)+ D \Delta\rho 
  + \tilde r(\rho)\rho,
\end{equation}
with rescaled coefficients
\[
\tilde a(\rho) = a(Z\rho),  
\qquad \tilde r(\rho) = r(Z\rho).
\]
Since reactions generally change the total mass, $\int_{\Omega}\rho(t,\mathbf{x})\,\mathrm{d}\mathbf{x}$ may not equal $1$ for $t>0$. Rather, it tracks the mass of $u$ via $\int_{\Omega} u(t,\mathbf{x})\,\mathrm{d}\mathbf{x} = Z \int_{\Omega} \rho(t,\mathbf{x})\,\mathrm{d}\mathbf{x}$.

\subsection{Operator Splitting}
To approximate the solution of the PDE, we discretize the time interval $[0,T]$ with step size $\tau>0$ and grid points
\[
0 = t_0 < t_1 < \cdots < t_M = T,\qquad \tau=t_{n+1}-t_n.
\]
Then an operator-splitting strategy is used to separate the dynamics according to their physical mechanisms. 
Equation~\eqref{equ:normalized_diffusion_reaction} can be written as
\begin{equation}
\partial_t \rho = \mathcal{A}\rho + \mathcal{B}\rho ,
\end{equation}
where the operators $\mathcal{A}$ and $\mathcal{B}$ correspond to
\begin{align}
\mathcal{A}\rho &= \nabla \cdot \left( \tilde a(\rho)\rho \right) + D\Delta\rho  , \\
\mathcal{B}\rho &= \tilde r(\rho)\rho .
\end{align}
Here, $\mathcal{A}$ describes the advection--diffusion dynamics, while $\mathcal{B}$ accounts for the reaction mechanism (e.g., proliferation or death).  The evolution of the exact solution over $t$ can be expressed formally as
\begin{equation}
\rho(t,\bx) = \exp\left( t (\mathcal{A} + \mathcal{B}) \right)\rho(0,\bx).
\end{equation}
In practice, the operator $\exp\!\big(\tau(\mathcal{A}+\mathcal{B})\big)$ cannot be evaluated explicitly, 
we adopt the first-order Lie-Trotter splitting \cite{trotter1959product}, which is justified by the Trotter product formula:
\begin{equation}
\exp\!\left( t (\mathcal{A} + \mathcal{B}) \right) 
= \lim_{M\to\infty} \Big( \exp\!\left(\tfrac{t}{M} \mathcal{B}\right) 
                          \exp\!\left(\tfrac{t}{M} \mathcal{A}\right) \Big)^M .
\end{equation}
At the discrete level, we approximate $\rho(t_{n+1},\cdot)$ by
\begin{equation}
\rho_{n+1} = \exp(\tau \mathcal{B}) \exp(\tau \mathcal{A}) \rho_n ,
\end{equation}
for $n=0,\cdots,M-1$.
This scheme provides a simple and efficient first-order approximation; 
higher-order splittings, such as Strang splitting \cite{strang1968construction}, may be employed for improved accuracy.
In practice, this decomposition allows us to design particle-based updates for each subprocess separately:
\begin{equation}
\rho_n^\ast = \exp(\tau \mathcal{A})\rho_n, 
\qquad 
\rho_{n+1} = \exp(\tau \mathcal{B})\rho_n^\ast.
\end{equation}

\subsection{Advection Diffusion Step}
\label{sec:adevection_diffusion}
In this step, we propagate particles under the advection–diffusion dynamics, where each particle follows a stochastic path determined by both a drift and a diffusion term. At time step $n$, let $S^N_n = \{\bX^i_{N,n}\}_{i\in \mathcal I_n^N}$ denote  a set of particles  whose initial population is $N$, which induce the empirical measure 
\[
\mu^N_{n}= \frac{1}{N}\sum_{i\in \mathcal I_n^N}\delta_{\bX_{N,n}^i}
\] 
approximates the solution at step $n$, i.e.,  $\mu^N_{n} (\intd \bx) \approx \mu_n(\intd \bx) = \rho_n(\bx)\intd \bx$. 
Our goal is to evolve these particles according to the transition probability $p(\bx,s;\by,t)$, which leads to an updated empirical measure
\[\mu_n^{N,*} = \int p(\bx,t_n;\by,t_n+\tau)  \mu_n^N(\intd \bx) =   \frac{1}{N}\sum_{i\in \mathcal I_n^N}\delta_{\bX_{N,n}^{i,*}}
\]  
reflecting the action of the advection--diffusion operator on the density.
By the Kolmogorov forward equation, the transition density $p(\bx,s;\by,t)$ satisfies 
\begin{equation}
    \partial_t p = \tilde {\mathcal A }p := - \nabla \cdot(b  p) + \frac{1}{2}\nabla^2:(\sigma \sigma^\top p) 
\end{equation}
with initial condition $\lim_{t\to s^+} p(\bx,s;\cdot,t) = \delta_\bx$. By matching the differential operator $\mathcal A$ with the target PDE \eqref{equ:reaction-diffusion}, we have 
\begin{equation}
    b(x) = -\tilde a(\rho^N_n(x)), \quad \frac{1}{2}\sigma \sigma^\top = D,
\end{equation}
where $\rho^N_n(x)$ represents the interpolated probability density of $\mu_n^N$ and will be discussed in Sec. \ref{sec:interpolation}.
Accordingly, the evolution of each particle $\bX^i_n$ as a stochastic process satisfying the It\^o stochastic differential equation
\begin{equation}\label{equ:overd_langevin}
    \intd \bX_{i,n}^N = b(\bX_{i,n}^N) \intd t  + \sigma \intd \bW^N_{i}(t),
\end{equation}
where $\bW^i_{N}(t)$ is a $2\mhyphen$dimensional Brownian motion. 
In practice, we approximate Eq. \eqref{equ:overd_langevin} using an Euler-Maruyama scheme with step size $\tau$, 
\begin{equation}
 \bX_{i,n}^{N,*} = \mathbf{X}_{i,n}^{N} + b(\bX_{N,n}^i) \tau  + \sigma \sqrt{\tau} \boldsymbol{\xi}^N_{i,n},
 \label{eq:euler_maruyama}
\end{equation}
where $\boldsymbol{\xi}^i_{N,n}$ represent identically independent distributed normal random variables; see Algorithm \ref{alg:sde} for the summary of the particle propagation for the advection--diffusion step. 

\begin{algorithm}
\caption{SDE propagation of the advection--diffusion step}\label{alg:sde}
\begin{algorithmic}
\Require Particle set: $S=\{\mathbf{X}_i\}_{i\in\mathcal I}$; time step $\tau>0$
\Require Drift term $b(\cdot)$
\Require Diffusion strength $\sigma$
\Ensure Propagated set $S_*=\{\mathbf{X}^{i,*}\}_{i\in\mathcal I}$
\Procedure{SDE\_Propagate}{$S,\,\tau,\,b,\,\sigma$}
  \State $S_{*}\gets [\ ]$
  \For{$i\in\mathcal I$}
    \State Sample $\boldsymbol{\xi}_i \sim \mathcal N(0,I_d)$ independently
    \State $\mathbf{X}^*_i \gets \mathbf{X}_i \;+\; b(\mathbf{X}_i;\mathcal F)\,\tau \;+\; \sqrt{\tau}\,\sigma\,\boldsymbol{\xi}_i$
    \State Append $\mathbf{X}^{*}_i$ to $S_{*}$
  \EndFor
  \State \Return $S_{*}$
\EndProcedure
\end{algorithmic}
\end{algorithm}

\subsection{Branching Step}
\label{sec:branching}
This step modifies the mass of the empirical measure and change the particle number. Given the intermediate empirical measure from the advection--diffusion step $\mu_{N,n}^*$, we aim to evolve this measure under the action of the reaction operator $\mathcal  B$ over a time step $\tau$ following
\[
\partial_t \rho = c_n(\mathbf{x})\,\rho,
\qquad c_n(\mathbf{x}) := \tilde r\!\big(\rho_n^\ast(\mathbf{x})\big),
\]
such that the resulting empirical measure
\begin{equation}
    \mu^N_{n+1} = \frac{1}{N} \sum_{i\in \mathcal I (N,(n+1)\tau)} \delta_{\bX^N_{i,n+1}}
\end{equation}
approximates the updated density $\mu^N_{n+1}(\intd \bx ) = \rho^N_{n+1}(\bx) \intd \bx \approx {\rm e}^{c_n(\mathbf{x}) \tau} \rho_{n}^{N,*}(\bx) \intd \bx$. Therefore, the particles located at $\bx$ should birth and death with probability $c(\bx)=\tilde r(\rho^*(\bx))$ depending on the positiveness of $\tilde r$.
To implement this step, we adopt a branching process. For each particle $\bX^{N,\ast}_{i,n}$, we associate a branching rule determined by $c_i:=c_n(\mathbf X^{N,\ast}_{i,n})$. Specifically, over the interval $[t_n,t_{n+1}]$, each particle undergoes one of the following stochastic events:
\begin{itemize}
  \item \emph{If \(c_i\ge 0\) (birth):} keep the parent and draw \(K_i\sim \mathrm{Poisson}\!\big(e^{c_i\tau}-1\big)\); output \(1+K_i\) identical copies at \(\mathbf X^{N,\ast}_{i,n}\).
  \item \emph{If \(c_i<0\) (death):} keep the parent with probability \(p_i=e^{c_i\tau}\), otherwise remove it.
\end{itemize}

Let the resulting set be \(S_{n+1}^N=\{\mathbf X^N_{i,n+1}\}_{i\in\mathcal I_{n+1}^N}\) and
\[
\mu_{n+1}^{N}(\mathrm{d}\mathbf{x})
\;=\;
\frac{1}{N}\sum_{i\in\mathcal I_{n+1}^N}\delta_{\mathbf X^N_{i,n+1}}(\mathrm{d}\mathbf{x}).
\]

\begin{proposition}
    Let $S=\{\bX_i\}_{i\in \mathcal I}$ be a set of particles, whose empirical measure is defined as $\mu (\intd \bx) = \frac{1}{N}\sum_{i\in \mathcal I }\delta_{\bX_i}$. If the particles are updated according to the above rules then the updated particle set $S_+=\{ \bX_{i,+}\}_{i\in {\mathcal I}_+}$ and corresponding  measure $ \mu_+ (\intd \bx) = \frac{1}{N}\sum_{i\in  {\mathcal  I}_+}\delta_{ \bX_{i,+}}$ should satisfies 
    \begin{equation}
        \mathbb E [\left<f, \mu_+\right>] =   \left<f,\exp( c\tau)\mu\right>,
    \end{equation}
    for all $f\in C^2_b(\mathbb R)$.
\label{prop:stochastic_branch}
\end{proposition}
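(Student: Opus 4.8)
The plan is to exploit two structural features of the branching rule: it acts independently on each parent, and every offspring inherits the \emph{exact} position of its parent, so that the test function evaluated on any descendant of $\bX_i$ equals $f(\bX_i)$. First I would introduce, for each $i\in\mathcal I$, the random offspring count $M_i$ — the number of particles in $S_+$ descended from $\bX_i$ — and record that all $M_i$ of these copies sit at $\bX_i$. Summing over parents then yields the pathwise identity
\[
\left< f, \mu_+\right> = \frac{1}{N}\sum_{i\in\mathcal I} M_i\, f(\bX_i),
\]
which isolates all the randomness into the scalars $M_i$, since the positions $\bX_i$ are deterministic data of $S$.

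Next I would take expectations and use linearity (the positions being fixed) to reduce the claim to the single-particle moment computation $\mathbb E[M_i] = e^{c_i\tau}$, where $c_i = c(\bX_i)$. This is precisely where the two branching rules are matched to the reaction weight: in the birth case ($c_i\ge 0$) we have $M_i = 1+K_i$ with $K_i\sim\mathrm{Poisson}(e^{c_i\tau}-1)$, so by the Poisson mean $\mathbb E[M_i] = 1+(e^{c_i\tau}-1) = e^{c_i\tau}$; in the death case ($c_i<0$) we have $M_i\sim\mathrm{Bernoulli}(p_i)$ with $p_i=e^{c_i\tau}\in(0,1)$, so $\mathbb E[M_i]=p_i=e^{c_i\tau}$. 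Both rules are thus engineered to produce the common value $\mathbb E[M_i]=e^{c_i\tau}$.

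Substituting back then gives
\[
\mathbb E\!\left[\left< f, \mu_+\right>\right] = \frac{1}{N}\sum_{i\in\mathcal I} e^{c(\bX_i)\tau}\, f(\bX_i) = \left< f, \exp(c\tau)\mu\right>,
\]
where the last equality is merely the definition of the weighted measure $\exp(c\tau)\mu = \frac1N\sum_{i\in\mathcal I} e^{c(\bX_i)\tau}\delta_{\bX_i}$. I do not anticipate a genuine obstacle here: once the offspring-count decomposition is in place, the statement is a one-line first-moment identity. The only points to state with care are that offspring are co-located with their parent, so $f$ factors out as $f(\bX_i)$, and that no regularity of $f$ beyond boundedness (needed only to keep the pairings finite) is actually used — the hypothesis $f\in C^2_b$ is stronger than required and is presumably retained for consistency with later arguments. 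I would also remark that independence across $i$ is not needed for the mean identity, linearity of expectation sufficing, although it becomes essential for any subsequent variance or $1/\sqrt{N}$ concentration estimate.
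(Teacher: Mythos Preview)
Your proposal is correct and follows essentially the same route as the paper: the paper also splits into the birth and death cases, computes the expected multiplicity at each parent (writing the Poisson sum explicitly rather than quoting its mean), and then reassembles $\frac{1}{N}\sum_{i}e^{c(\bX_i)\tau}f(\bX_i)=\langle f,e^{c\tau}\mu\rangle$. Your introduction of the offspring count $M_i$ is a slightly tidier packaging of the same computation, and your remarks on the unnecessary strength of the $C^2_b$ hypothesis and the irrelevance of independence for the mean identity are accurate side observations not made in the paper.
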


\begin{proof} Let $\lambda_i = {\rm e}^{c(\mb X^i) \tau} -1 $ denote the  rate of the Poisson process for the birth branch with $c(\mb x^i) \ge 0$. For any test function $f\in C^2_b(\mathbb R^+, \mathbb R^2)$, we have
\begin{equation}
\begin{aligned}
     & \mathbb E\left[\left<f,  \mu_+\right>\right]
    \\=& \frac{1}{N} \sum_{i\in \mathcal I ,c(\bX_i)>0} \sum_{j=0}^{\infty} 
    \left[(j+1) \frac{e^{-\lambda_i} \lambda_i^j}{j!}\right] f(\bX_i)
     + \frac{1}{N} \sum_{i\in \mathcal I , c(\bX_i)<0}
    \exp( c(\bX_i)\tau)f(\bX_i) \\
    =& \frac{1}{N}\sum_{i\in \mathcal I ,c(\bX_i)>0} 
    \left(
    \exp(c(\bX_i)\tau)f(\bX_i)
    \right) 
     + \frac{1}{N}\sum_{i\in \mathcal I , c(\bX_i)<0}\left(
    \exp( c(\bX_i)\tau)f(\bX_i) \right)\\
    = & \frac{1}{N}\sum_{i\in \mathcal I}\exp( c(\bX_i)\tau)f(\bX_i) 
    = \left<f,\exp( c\tau)\mu\right>,
\end{aligned}
\end{equation}
where the second equality follows from the fact that the expectation of a Poisson process equal to the rate.
\end{proof}
By choosing $c(\mb x) = \tilde{r}(\rho^{\ast}(\mb x))$, this stochastic branching process realizes the operator $\exp(\mathcal{B}\tau)$ at the particle level. We summarize the process in Algorithm \ref{alg:birthdeath}.

\begin{algorithm}
\caption{Birth--Death Step}\label{alg:birthdeath}
\begin{algorithmic}
\Require Propagated particles $S=\{\mathbf X_{i}\}_{i\in\mathcal I}$, time step $\tau>0$, rate function $r:\mathbb R^2\to\mathbb R$
\Ensure Post-branching set $S_{+}=\{\mathbf X_{i}\}_{i\in\mathcal I_{+}}$
\Procedure{Birth\_Death}{$S,\,\tau,\,r(\cdot)$}
    \State $S_{n+1}\gets [\ ]$
    \For{$i\in\mathcal I$}
        \State $r_i \gets r(\mathbf X_{i})$
        \If{$r_i>0$} \Comment{birth}
            \State Append $\mathbf X_{i}$ to $S_{+}$ \Comment{keep parent}
            \State $p_{\mathrm{birth}} \gets \exp(r_i\,\tau) - 1$
            \State Draw $K_i \sim \mathrm{Poisson}\!\big(p_{\mathrm{birth}}\big)$, append $K_i$ duplicates of $\mathbf X_{i}$ to $S_{+}$
        \ElsIf{$r_i<0$} \Comment{death}
            \State $p_{\mathrm{survive}}\gets \exp(r_i\,\tau)$ 
            \State With probability $p_{\mathrm{survive}}$, append $\mathbf X_{i}$ to $S_{+}$
            \State \quad (otherwise the particle is removed)
        \Else \Comment{$r_i=0$}
            \State Append $\mathbf X_{i}$ to $S_{+}$
        \EndIf
    \EndFor
    \State \Return $S_{+}$
\EndProcedure
\end{algorithmic}
\end{algorithm}

\subsection{Nonlinear Interpolation}
\label{sec:interpolation}
To evaluate the functions $\tilde a(\rho)$ , and $\tilde r(\rho)$ , that govern the drift and reaction dynamics in our particle-based scheme, we need to reconstruct the continuous density 
$\rho$ from its empirical measure. Specifically, given an empirical measure
\[
\mu^N_n= \frac{1}{N}\sum_{i\in \mathcal I^N_n}\delta_{\bX_{i,n}^N},
\] 
we seek a smooth approximation of $\rho^N_n$ that allows pointwise evaluation and differentiation as required by the operator $\mathcal{A}$ and $\mathcal{B}$.

In this work, we choose to interpolate the empirical distribution in the Fourier space. Specifically, we project the density onto a finite set of Fourier basis functions, allowing efficient representation and differentiation. 

The Fourier coefficients are evaluated from the particle locations $\bX_{N,n}^i$ under periodic boundary conditions. Without loss of generality, we take the periodic domain to be $\Omega=[-\pi,\pi]^d$ , from which the reconstructed density $\rho_n(t,x)$. To be more specific, we define a Fourier interpolation operator $\mathcal P_K$
that maps the empirical particle distribution into a truncated Fourier representation 
\[
\rho^N_n(\bx) = \mathcal{P}_K(\mu^N_n)(\bx) 
          = \sum_{\vert k_1\vert ,\vert k_2\vert \leq K} \hat{\rho}_n(k_1,k_2,\cdots,k_d)\,\phi_{k_1}(\bx_1)\phi_{k_2}(\bx_2)\cdots\phi_{k_d}(\bx_d),
\]
where the basis functions are chosen as
\[
\phi_k(x) =
\begin{cases}
\frac{1}{\sqrt{2\pi}}, & k=0, \\
\frac{1}{\sqrt{2\pi}} \cos(k  x), & k \in \mathbb{Z}_+, \\
\frac{1}{\sqrt{2\pi}}\sin(k  x), & k \in \mathbb{Z}_- .
\end{cases}
\]

The Fourier coefficients are computed directly from the empirical measure:
\begin{equation}
\begin{aligned}
    \hat{\rho}_n(k_1,k_2,\cdots,k_d) &= \int \phi_{k_1}(y_1)\phi_{k_2}(y_2)\cdots \phi_{k_d}(y_d)\,\mu^N_n(\intd\by)
\end{aligned}
\label{eq:Fourier_mode}
\end{equation}
The process is shown in Algorithm \ref{alg:interpolate}. 
This choice is motivated by the simplicity and spectral accuracy of Fourier methods for smooth periodic functions. However, we note that other functional spaces, such as polynomial bases, wavelets, or kernel-based representations, could offer better adaptability in non-periodic or highly localized settings. We leave the exploration of such alternatives to future work.

\begin{algorithm}
\caption{Interpolation Step (fixed-$N$ normalization)}\label{alg:interpolate}
\begin{algorithmic}
\Require Particle set: $S=\{ \mathbf{X}^i \}_{i\in \mathcal{I}}$; fixed initial population $N$; truncation index set $\mathcal{M}_K$; domain descriptor $\Omega$; basis $\{\psi_m\}_{m\in\mathcal{M}_K}$ with (optionally) known gradients $\{\nabla\psi_m\}_{m\in\mathcal{M}_K}$
\Ensure Truncated field $\tilde{\rho}_n(\mathbf{x})$ (and, if needed, $\nabla\tilde{\rho}_n(\mathbf{x})$)
\Procedure{Interpolate}{$S_n,\,N,\,\mathcal{M}_K,\,\Omega,\,\{\psi_m\}$}
    \State \textbf{(Coefficient projection with fixed-$N$ normalization)}
    \State For each $m\in\mathcal{M}_K$, set
    \[
      a_m \;\gets\; \frac{1}{N}\sum_{i\in\mathcal{I}} \psi_m\!\big(\mathbf{X}^i\big).
    \]
    \State \textbf{(Series assembly)}
    \[
      \tilde{\rho}_n(\mathbf{x}) \;\gets\; \sum_{m\in\mathcal{M}_K} a_m\,\psi_m(\mathbf{x}).
    \]
    \State \textbf{(Optional gradients for drift/evaluation)}
    \If{gradients are required and $\{\nabla\psi_m\}$ are available}
      \[
        \nabla\tilde{\rho}_n(\mathbf{x}) \;\gets\; \sum_{m\in\mathcal{M}_K} a_m\,\nabla\psi_m(\mathbf{x}).
      \]
    \EndIf
    \State \Return $\tilde{\rho}_n$ (and $\nabla\tilde{\rho}_n$ if computed)
\EndProcedure
\end{algorithmic}
\end{algorithm}

\section{Error Estimations}
In this section, we analyze the error between the numerical and exact solutions for Equation (\ref{equ:reaction-diffusion}). First, we introduce the projected PDE 
\begin{equation}\label{equ:projected}
    \partial_t u_K  = \nabla \cdot \left( a (\mathcal P_K u_K) u_K\right) + D \Delta u_K  + r\left(\mathcal P_K u_K\right)  u_K, 
\end{equation}
where $\mathcal P_K$ means the projection operator on the Fourier space up to the smallest $K$ modes. To analyze the convergence of the stochastic branching algorithm, we introduce the following empirical measure associated with algorithm,
Let $S_K(N,t) = \{\bX^N_{i,K}(t) \}_{i\in \mathcal I_K(N,t)}$ denote the set of particles at time $t$, where
 $\mathcal{I}_K(N,t)$ is the index set. We define the following empirical measure \[
 \mu^N(t,\intd x) = \frac{1}{N} \sum_{i\in \mathcal I(N,t)} \delta_{\bX^N_{i,K}(t)}(\intd \bx),
 \] which represents the random, finite measure obtained from the particle configuration at time $t$. In the weak form, for every test function $f \in C^2(\mathbb T^d)$,
 \begin{equation}
     \begin{aligned}
\langle f, \mu^N(t)\rangle
&= \langle f, \mu^N(0)\rangle
 - \int_0^t
   \!\!\langle\nabla f,\,a(P_K\mu^N)\mu^N\rangle\,\intd\tau
 + \int_0^t
   \!\!\langle\Delta f,\,D\,\mu^N\rangle\,\intd\tau \\
&\quad + \int_0^t \langle f, r(\mathcal P_K \mu^N)\mu^N\rangle\,\intd\tau 
 + \frac{1}{N}\int_0^t\sum_{i\in\mathcal I(N,\tau)}
   \sqrt{2D}\,\nabla f(\bX_{i,K}^N(\tau))\!\cdot\! \intd \bW^N_i(\tau)\\
 &\quad + \frac{1}{N}\int_0^t\!\sum_{i\in\mathcal I(N,\tau),r(\mathcal P_K \mu^N(\bX_{i,K}))>0}\!
   f(\bX_{i,K}^N(\tau))\, \intd \mathcal R^{N,\text{birth}}_i(\tau)\\
&\quad- \frac{1}{N}\int_0^t\!\sum_{i\in\mathcal I(N,\tau),r(\mathcal P_K \mu^N(\bX_{i,K}))<0}\!
   f(\bX_{i,K}^N(\tau))\, \intd \mathcal R^{N,\text{death}}_i(\tau),
\end{aligned}
 \end{equation}
where two jumping processes are added to the measure for $\sigma\in \mathbb R^+$ 
\[
\begin{aligned}
   \mathcal R^{N,birth}_{i^*}( \sigma) = Q^{N,birth}_i \left(\int_{0}^{ \sigma} \mathbf 1 _{S(N,\tau)}(i) r(\mathcal P_K \mu^N)\intd \tau\right),\\
    \mathcal R^{N,death}_{i^*}( \sigma) = Q^{N,death}_i \left(\int_{0}^{\sigma} -\mathbf 1 _{S(N,\tau)}(i) r(\mathcal P_K \mu^N)\intd \tau\right), 
\end{aligned}
\]
 where $Q^{N,birth}_i$, $Q^{N,death}_i$ are independent standard Poisson processes. 
 The following theorem gives the error estimation of our algorithm with the projected PDE.
\begin{proposition}
\label{prop:u_K_mu}
Let $\mathbb{T}^d = (\mathbb{R}/2\pi\mathbb{Z})^d$, and assume that the coefficients $a$ and $r$ are bounded and lipschitz continuous with a uniform constant $C_{b,lip}$, i.e. for any $x, y\in \mathbb R$, we have
    \[
    |a(x)|,|r(x)| < C_{b,lip} \quad |a(x)-a(y)|,|r(x)-r(y)| < C_{b,lip} |x-y|.
    \]
    Suppose that for a finite time horizon $[0,T)$: The number of particles in the stochastic particle algorithm is uniformly bounded $|\mathcal{I}(N,t)| \le C_{N}N$; The projected PDE admits a unique bounded solution $u_K$, with $\|u_K(t,\cdot)\|_{L^\infty}\leq C_u$ for all $t\in [0,T]$.
    We define the $H^{-s}$ error
    $$E_{K}(t) = \frac{1}{2} ||u_K(t) - \mu^N(t)||_{H^{-s}}^2$$
    Assume $s > d/2 + 1$. Then, we have:
    \[
   \frac{d}{dt} \mathbb E  E_K(t) \leq 
C(C_N,C_u,C_{b,lip},N,D,s,d)  \mathbb E E_K(t) 
    \]
    and therefore \[
    \mathbb E E_K(t) \leq \exp\left(C(C_N,C_u,C_{b,lip},N,D,s,d)t\right)\mathbb E E_K(0).
    \]  
\end{proposition}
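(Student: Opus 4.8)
The plan is to work entirely in the spectral characterization $\|g\|_{H^{-s}}^2=\sum_k\langle k\rangle^{-2s}|\langle\phi_k,g\rangle|^2$ with $\langle k\rangle=(1+|k|^2)^{1/2}$, so that the Fourier basis $\{\phi_k\}$ supplies the admissible test functions and $E_K=\tfrac12\sum_k\langle k\rangle^{-2s}W_k^2$ with $W_k:=\langle\phi_k,u_K-\mu^N\rangle$. Since $\mu^N$ is a jump--diffusion, I would apply the It\^o formula for semimartingales to this quadratic functional, obtaining $dE_K=\sum_k\langle k\rangle^{-2s}W_k\,dW_k+\tfrac12\sum_k\langle k\rangle^{-2s}\,d[W_k,W_k]$. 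Substituting the weak evolution of $\mu^N$ stated above together with the projected PDE \eqref{equ:projected} for $u_K$, the first sum splits into a deterministic drift pairing and three mean-zero martingale contributions (one Brownian, two Poisson). Taking expectations kills the martingales, leaving only the drift pairing and the predictable quadratic-variation sum to estimate.

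For the drift pairing I would write $w=u_K-\mu^N$ and test against $\varphi:=(I-\Delta)^{-s}w$, which lies in $H^s$ with $\|\varphi\|_{H^s}=\|w\|_{H^{-s}}$; the hypothesis $s>d/2+1$ then gives the Sobolev embedding $H^s\hookrightarrow W^{1,\infty}(\mathbb T^d)$, so that $\varphi$ and $\nabla\varphi$ are bounded by $C\|w\|_{H^{-s}}$. The diffusion contribution is computed spectrally as $-D\sum_k|k|^2\langle k\rangle^{-2s}W_k^2\le 0$; I keep it with its sign, since it furnishes the dissipation used below. For advection and reaction I insert and subtract, e.g. $a(\mathcal P_Ku_K)u_K-a(\mathcal P_K\mu^N)\mu^N=a(\mathcal P_Ku_K)\,w+\bigl(a(\mathcal P_Ku_K)-a(\mathcal P_K\mu^N)\bigr)\mu^N$, and similarly for $r$. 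Boundedness of $a,r$ controls the first piece, while the Lipschitz bound $|a(\mathcal P_Ku_K)-a(\mathcal P_K\mu^N)|\le C_{b,lip}|\mathcal P_Kw|$ together with $\|u_K\|_{L^\infty}\le C_u$ and the bounded population controls the second. The reaction pairing is then directly $\le C\,\|w\|_{H^{-s}}^2=2C\,E_K$; the advection pairing loses one derivative through the $\nabla\varphi$, which I would recover by Young's inequality, bounding it by $\varepsilon\,D\sum_k|k|^2\langle k\rangle^{-2s}W_k^2+C_\varepsilon E_K$ and absorbing the first term into the diffusion dissipation for $\varepsilon$ small.

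The quadratic-variation sum supplies the remaining constants. The Brownian martingale gives $d[W_k,W_k]/dt=\tfrac{2D}{N^2}\sum_{i}|\nabla\phi_k(\bX_i)|^2$, and since $|\nabla\phi_k|^2\le(2\pi)^{-d}|k|^2$ the weighted sum $\sum_k\langle k\rangle^{-2s}|k|^2$ converges precisely under $s>d/2+1$; combined with $|\mathcal I(N,t)|\le C_NN$ this yields a contribution of order $D\,C_N/N$. The two Poisson jump terms are analogous: a single branching of particle $i$ shifts $W_k$ by $\pm\tfrac1N\phi_k(\bX_i)$ at rate $\le C_{b,lip}$, and $\|\phi_k\|_{L^\infty}\le(2\pi)^{-d/2}$ with $\sum_k\langle k\rangle^{-2s}<\infty$ (valid already for $s>d/2$) bounds their total by $C_{b,lip}C_N/N$. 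Folding these $N$-dependent corrections together with the drift bound produces $\tfrac{d}{dt}\mathbb E E_K\le C(C_N,C_u,C_{b,lip},N,D,s,d)\,\mathbb E E_K$, and Gronwall's inequality then gives the claimed exponential estimate.

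The step I expect to be most delicate is making the advection and reaction drift bounds genuinely multiplicative in $E_K$ and uniform in the truncation level $K$, which does not appear among the constants. The difficulty is that the variable coefficients $a(\mathcal P_Ku_K)$ and $r(\mathcal P_Ku_K)$ must act as multipliers on a high-index Sobolev space while only $\|u_K\|_{L^\infty}\le C_u$ is assumed; controlling the factor $\mathcal P_Kw$ arising from the Lipschitz difference requires exploiting the self-adjointness and smoothing of $\mathcal P_K$ and the Lipschitz structure carefully, rather than naively estimating $\|\mathcal P_Kw\|_{L^\infty}$, which would cost powers of $K$. The companion subtlety is the one-derivative loss in the advection pairing, whose rigorous treatment hinges on the diffusion dissipation being strong enough to absorb it; this is where the strict positivity $D>0$ is essential.
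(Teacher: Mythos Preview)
Your overall strategy coincides with the paper's: apply the It\^o formula to $E_K$, separate into drift (advection, diffusion, reaction) and quadratic-variation (Brownian, branching) contributions, absorb the one-derivative loss from advection into the diffusion dissipation via Young's inequality, and close by Gronwall. Two concrete choices, however, differ from the paper's execution and are where your plan runs into trouble.

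First, in the advection and reaction drift you split
\[
a(\mathcal P_Ku_K)u_K-a(\mathcal P_K\mu^N)\mu^N
= a(\mathcal P_Ku_K)\,w + \bigl(a(\mathcal P_Ku_K)-a(\mathcal P_K\mu^N)\bigr)\mu^N,
\]
pairing the Lipschitz difference with the singular measure $\mu^N$. This forces an $L^\infty$ control of $\mathcal P_Kw$, which (as you correctly worry) costs powers of $K$. The paper makes the opposite split,
\[
a(\mathcal P_K\mu^N)(\mu^N-u_K)+\bigl(a(\mathcal P_K\mu^N)-a(\mathcal P_Ku_K)\bigr)u_K,
\]
so that the Lipschitz piece sits against the \emph{bounded} function $u_K$. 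One then estimates $\langle\nabla\phi_K,(a(\mathcal P_K\mu^N)-a(\mathcal P_Ku_K))u_K\rangle$ by $H^{s-1}$/$H^{-(s-1)}$ duality and bounds $\|a(\mathcal P_K\mu^N)-a(\mathcal P_Ku_K)\|_{H^{-s+1}}\le C_{b,lip}\|\mathcal P_K\hat e_K\|_{H^{-s+1}}\le C_{b,lip}\|\hat e_K\|_{H^{-s+1}}$, with no $K$-dependence. This single swap dissolves the ``delicate step'' you flagged; the self-adjointness of $\mathcal P_K$ is not what rescues it.

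Second, your quadratic-variation bounds are additive constants of order $1/N$ (via $\sum_k\langle k\rangle^{-2s}|k|^2<\infty$), which would yield $\tfrac{d}{dt}\mathbb E E_K\le C\,\mathbb E E_K+C'/N$, not the purely multiplicative inequality asserted in the proposition. The paper instead writes the Brownian and branching corrections as $\tfrac{D}{N^2}\sum_i|\nabla\phi_K(\bX_i^N)|^2$ and $\tfrac{1}{2N^2}\sum_i\phi_K(\bX_i^N)^2|r|$, with $\phi_K=(I-\Delta)^{-s}\hat e_K$, and then invokes the Sobolev embedding $H^s\hookrightarrow W^{1,\infty}$ (this is exactly where $s>d/2+1$ is used) together with the identity $\|\phi_K\|_{H^s}^2=2E_K$ to obtain bounds of the form $\tfrac{C}{N}E_K$. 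That is what produces the clean differential inequality $\tfrac{d}{dt}\mathbb E E_K\le C\,\mathbb E E_K$ and the stated exponential estimate; your additive version gives a different (still useful, but not the stated) conclusion.
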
    
\begin{proof}
    Let us denote the error as $\hat{e}_{K} = u_{K}-\mu^{N}$ and define the error functional $E_{K}(t) = \frac{1}{2}||\hat{e}_{K}||_{H^{-s}}^{2} = \frac{1}{2}\langle\hat{e}_{K},(I-\Delta)^{-s}\hat{e}_{K}\rangle$. We also introduce the auxiliary function $\phi_{K} = (I-\Delta)^{-s}\hat{e}_{K}$. A key identity from these definitions is:
    \begin{equation}
        \begin{aligned}
            ||\phi_{K}||_{H^{s}}^{2} =& \langle \phi_K, (I-\Delta)^s \phi_K \rangle = \langle (I-\Delta)^{-s} \hat{e}_K, (I-\Delta)^s (I-\Delta)^{-s} \hat{e}_K \rangle\\
            =& \langle (I-\Delta)^{-s} \hat{e}_K, \hat{e}_K \rangle = ||\hat{e}_K||_{H^{-s}}^2= 2E_{K}(t)
        \end{aligned}
    \end{equation}
We apply It\^o's formula to the stochastic process $E_K(t)$. The dynamics of $u_K$ are deterministic, while the dynamics of $\mu^N$ are stochastic. The time derivative of the expectation $\mathbb{E}E_K(t)$ is given by the drift terms plus the quadratic variation terms from the martingale parts.
\begin{equation}
 \begin{aligned}
 \frac{d}{dt} \mathbb E  E_K(t)  = &   \quad \mathbb E \left[\left\langle \nabla  \phi_K , a(\mathcal P_K \mu^N) \mu^N-  a(\mathcal P_K u_K) u_K   \right\rangle \right] \\
  & +  \mathbb E \left[\left\langle \Delta  \phi_K , D (u_K - \mu^N) \right\rangle \right] \\
  & + \mathbb E \left[\left\langle   \phi_K , r(\mathcal P_K u_K)u_K - r(\mathcal P_K \mu^N)\mu^N \right\rangle \right]  \\
  & +  \mathbb E \left[\frac{D}{N^2} \sum_{i\in \mathcal I (N,t)}  \left|\nabla   \phi_K(\bX_N^i(t) )\right|^2 \right] \\
  & + \mathbb E \left[\frac{1}{2N^2} \sum_{i\in \mathcal I (N,t)} \left(  \phi_K(\bX_N^i(t))\right)^2 \left|r(\mathcal{P}_K \mu^N(\bX_N^i(t)))\right|\right]\\
  & = \mathbb E \left[I_1\right]+\mathbb E \left[I_2\right]+\mathbb E \left[I_3\right]+\mathbb E \left[I_4\right]+
  \mathbb E \left[I_5\right].
 \end{aligned}
\end{equation}
\textbf{Term $I_1$ (Advection)}: We split this term: 
\begin{equation}
    \begin{aligned}
        I_1 
        = &  
         \left\langle \nabla  \phi_K,  a(\mathcal P_K \mu^N ) ( \mu^N - u_K )\right\rangle 
        +
        \left\langle \nabla  \phi_K,  \left( a(\mathcal P_K \mu^N) - a(\mathcal P_K u_K) \right) u_K \right\rangle \\
        =& I_{11} + I_{12}.
    \end{aligned}
\end{equation}
For $I_{11}$, using duality $\langle H^{s-1}, H^{-s+1} \rangle$, boundedness of $a$, and Young's inequality:
\[
\begin{aligned}
    I_{11}
        \leq & 
        \left|\left\langle a(\mathcal P_K \mu^N)  \nabla  \phi_K,   (u_K - \mu^N )\right\rangle\right|\\
        \leq& \left\|a(\mathcal P_K \mu^N)  \nabla  \phi_K\right\|_{H^{s-1}} \left\|(u_K - \mu^N )\right\|_{H^{-s+1}} \\
        \leq& C_{b,lip} \left\|\nabla  \phi_K\right\|_{H^{s-1}} \left\|(u_K - \mu^N )\right\|_{H^{-s+1}} \\
        \leq& \frac{C_{b,lip}}{2\epsilon}\left\|\nabla  \phi_K\right\|_{H^{s-1}}^2 + \frac{C_{b,lip}\epsilon}{2}\|\hat{e}_K\|^2_{H^{-s+1}}
        \\
         \leq& \frac{C_{b,lip}}{2\epsilon}\left\|  \phi_K\right\|_{H^{s}}^2 + \frac{C_{b,lip}\epsilon}{2}\|\hat{e}_K\|^2_{H^{-s+1}}\\
        = & \frac{C_{b,lip}}{\epsilon} E_K(t)+ \frac{C_{b,lip}\epsilon}{2}\|\hat{e}_K\|^2_{H^{-s+1}}.
\end{aligned}
\]
For $I_{12}$, using Lipschitz continuity of $a$ and boundedness of $u_K$:
\[
\begin{aligned}
     I_{12} \leq & \left\|u_K \nabla \phi_K\right\|_{H^{s-1}} \| a(\mathcal  P_K \mu^N) - a(\mathcal P_K u_K)\|_{H^{-s+1}}\\
     \leq&  C_{b,lip} C_u\left\|\nabla  \phi\right\|_{H^{s-1}} \left\|\mathcal P_K\hat{e}_K\right\|_{H^{-s+1}} \\
     \leq & C_{b,lip} C_u\left\| \phi_K\right\|_{H^{s}} \left\|\hat{e}_K \right\|_{H^{-s+1}}
     \\ 
     \leq& \frac{C_{b,lip} C_u}{2\epsilon}E_K(t)
        + \frac{C_{b,lip} C_u\epsilon}{2}\|\hat{e}_K\|^2_{H^{-s+1}}.
\end{aligned}
\]
In total, we have
\[
I_1\leq \frac{C_{1}}{\epsilon}E_K(t)
        + \frac{C_{1}\epsilon}{2}\|\hat{e}_K\|^2_{H^{-s+1}}.
\]
\textbf{Term $I_2$ (Diffusion): }
\[
\begin{aligned}
I_{2} = & D\left\langle (I-(I-\Delta))  \phi ,  \hat{e}_K \right\rangle\\
   =& D\left\langle  \phi_K ,  \hat{e}_K \right\rangle -  D\left\langle (I-\Delta)  \phi_K ,  \hat{e}_K \right\rangle\\
   = & 2D E_K(t) - D\left\langle (I-\Delta)^{-s+1}  \hat{e}_K ,  \hat{e}_K \right\rangle\\
   = & 2D E_K(t) - D\|\hat{e}_K\|_{H^{-s+1}}^2.
\end{aligned}
\]
\textbf{Term $I_3$ (Reaction):} This follows the same logic as $I_1$.
\[
\begin{aligned}
I_3 
=& \left\langle   \phi_K , \left(r\left(\mathcal P_K u_K\right)-r\left(\mathcal P_K \mu^N\right)\right)u_K\right\rangle 
+ \left\langle   \phi_K , r(\mathcal P_K \mu^N)\left(u_K-\mu^N\right) \right\rangle \\
\leq &  C_{b,lip} \|u_K\|_{L^\infty}E_K(t) +C_{b,lip} E_K(t)
\leq  C_3 E_K(t).
\end{aligned}
\]
\textbf{Term $I_4$ (Diffusion QV):}
Using the bound $|\mathcal{I}(N,t)| \le C_N N$ and Sobolev embedding:
\begin{equation}
    \begin{aligned}
    I_4  \leq &
    \frac{D C_N}{N} \|\nabla \phi_K\|^2_{L^\infty}
    \leq \frac{D C_N}{N}\| \phi_K\|^2_{H^{s}}= \frac{C_4}{N} E_K(t).
\end{aligned}
\end{equation}
\textbf{Term $I_5$ (Branching QV):}
Using $s > d/2+1 > d/2$ and $||r||_{L^\infty} \le C_{b,lip}$:
\[
\begin{aligned}
    I_5 
    \leq& \frac{C_N}{2N}\|r\|_{L^\infty}\|\phi_K\|^2_{L^\infty}
    \leq  \frac{C_NC_{b,lip}}{2N}\|\phi_K\|^2_{H^{s}}
          = \frac{C_5}{N} E_K(t).
\end{aligned}
\]
\textbf{Conclusion:}
Combining all the terms, we get:

$$\frac{d}{dt}\mathbb{E}E_{K}(t) \le \left( \frac{C_1}{\epsilon} + 2D + C_3 + \frac{C_4}{N} + \frac{C_5}{N} \right) \mathbb{E}E_K(t) + \left( \frac{C_1 \epsilon}{2} - D \right) \mathbb{E}||\hat{e}_{K}||_{H^{-s+1}}^{2}.$$

Since $D > 0$, we can choose $\epsilon$ sufficiently small, for example $\epsilon = D/C_1$, such that the last term is negative, which yields 
\[
\begin{aligned}
    \frac{d}{dt} \mathbb E  E_K(t) \leq 
C(C_N,C_u,C_{b,lip},N,D,s,d) \mathbb E  E_K(t). 
\end{aligned}
\]
By Gr\"onwall,
\[
\mathbb E E_K(t) \leq \exp(C(C_N,C_u,C_{b,lip},N,D,s,d)t)\mathbb E E_K(0).
\]
\end{proof}

Next, we estimate the error between the projected PDE \eqref{equ:projected} and the original PDE \eqref{equ:reaction-diffusion}.
\begin{proposition}
\label{prop:u_u_K}
Let $\mathbb{T}^d = (\mathbb{R}/2\pi\mathbb{Z})^d$, and assume that the coefficients $a$ and $r$ are bounded and lipschitz continuous with a uniform constant $C_{b,lip}$, i.e. for any $x, y\in \mathbb R$, we have
    \[
    |a(x)|,|r(x)| < C_{b,lip} \quad |a(x)-a(y)|,|r(x)-r(y)| < C_{b,lip} |x-y|.
    \]
    Suppose that for a finite time horizon $[0,T)$: $u$ is a sufficiently smooth solution of the nonlinear PDE~\eqref{equ:reaction-diffusion} on $\Omega$ with periodic  boundary conditions, and $u_K$ is the solution of the projected PDE~\eqref{equ:projected}. 
If $u(\cdot,t)\in \mathbb H^m(\Omega)$ for some $m>1$, then the error $e_K=u- u_K$ satisfies
\[
\|e_K(t)\|_{L^2}^2 \;\le\; C_T\Big(\|e(0)\|_{L^2}^2 + K^{-2m}\|u\|_{L^2(0,t;\mathbb H^m)}^2\Big),
\]
for some constant $C_T>0$ depending only on $T$ and the coefficients.
\end{proposition}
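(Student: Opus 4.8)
The plan is to run a standard parabolic energy estimate on the error $e_K = u - u_K$ and close it with Grönwall's inequality, letting the spectral truncation error $\|(I-\mathcal P_K)u\|_{L^2}$ furnish the $K^{-m}$ gain. First I would subtract \eqref{equ:projected} from \eqref{equ:reaction-diffusion} to obtain the error equation
\[
\partial_t e_K = \nabla\cdot\big(a(u)u - a(\mathcal P_K u_K)u_K\big) + D\Delta e_K + \big(r(u)u - r(\mathcal P_K u_K)u_K\big).
\]
Testing against $e_K$ in $L^2$ and using periodicity to integrate by parts (so the divergence-form flux lands a derivative on $e_K$ and the diffusion term yields dissipation) gives the identity
\[
\tfrac12\tfrac{d}{dt}\|e_K\|_{L^2}^2 + D\|\nabla e_K\|_{L^2}^2
= -\langle a(u)u - a(\mathcal P_K u_K)u_K,\ \nabla e_K\rangle
+ \langle r(u)u - r(\mathcal P_K u_K)u_K,\ e_K\rangle .
\]
Here the smoothness hypothesis is used to guarantee $\|u(t,\cdot)\|_{L^\infty}\le C_u$ (by the embedding $\mathbb H^m\hookrightarrow L^\infty$ for $m>d/2$), which is the only $L^\infty$ bound the argument requires.

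The key algebraic step is to decompose each nonlinear flux difference so that the \emph{bounded} true solution $u$ multiplies the coefficient difference, thereby avoiding any need for an $L^\infty$ bound on $u_K$ (which is not assumed). Concretely I would write
\[
a(u)u - a(\mathcal P_K u_K)u_K = a(\mathcal P_K u_K)\,e_K + \big(a(u) - a(\mathcal P_K u_K)\big)u,
\]
and estimate the coefficient difference through the Lipschitz triangle inequality, inserting $\mathcal P_K u$:
\[
|a(u) - a(\mathcal P_K u_K)| \le C_{b,lip}\big(|u - \mathcal P_K u| + |\mathcal P_K e_K|\big),
\qquad \|\mathcal P_K e_K\|_{L^2}\le \|e_K\|_{L^2}.
\]
The first piece $a(\mathcal P_K u_K)e_K$ is bounded by $C_{b,lip}\|e_K\|_{L^2}$, and the second by $C_{b,lip}C_u\big(\|(I-\mathcal P_K)u\|_{L^2}+\|e_K\|_{L^2}\big)$. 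Pairing both against $\nabla e_K$ and applying Young's inequality with a small parameter produces terms of the form $C\|e_K\|_{L^2}^2$, $C\|(I-\mathcal P_K)u\|_{L^2}^2$, and a controllable multiple of $\|\nabla e_K\|_{L^2}^2$; choosing the Young parameters so that the total coefficient on $\|\nabla e_K\|_{L^2}^2$ stays below $D$ lets me absorb it into the diffusion dissipation. The reaction bracket is handled by the identical decomposition but, being zeroth order, requires no integration by parts and is estimated directly against $e_K$, yielding only $\|e_K\|_{L^2}^2$ and $\|(I-\mathcal P_K)u\|_{L^2}^2$ contributions.

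Collecting the estimates, the gradient terms cancel against the dissipation and I obtain the differential inequality
\[
\tfrac12\tfrac{d}{dt}\|e_K\|_{L^2}^2 \le C\|e_K\|_{L^2}^2 + C\,\|(I-\mathcal P_K)u\|_{L^2}^2 ,
\]
with $C$ depending only on $D$, $C_{b,lip}$, and $C_u$. I would then invoke the standard spectral truncation bound $\|(I-\mathcal P_K)u\|_{L^2}\le C K^{-m}\|u\|_{\mathbb H^m}$, integrate in time, and apply the integral form of Grönwall to reach
\[
\|e_K(t)\|_{L^2}^2 \le e^{Ct}\|e_K(0)\|_{L^2}^2 + C e^{Ct}K^{-2m}\!\int_0^t\!\|u(\tau)\|_{\mathbb H^m}^2\,\intd\tau ,
\]
which gives the claimed bound with $C_T = Ce^{CT}$. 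The main obstacle is the divergence-form advective flux: integrating by parts transfers a derivative onto $e_K$, so the resulting $\|\nabla e_K\|_{L^2}$ must be absorbed by the diffusion dissipation, and this only works because the splitting above keeps the bounded factor $u$ (rather than $u_K$) against the coefficient difference and because the truncation error enters at the $L^2$ level, not a negative-order norm.
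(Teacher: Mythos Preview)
Your proposal is correct and follows essentially the same route as the paper: the same energy identity, the same splitting $a(u)u - a(\mathcal P_K u_K)u_K = a(\mathcal P_K u_K)e_K + (a(u)-a(\mathcal P_K u_K))u$ (placing the bounded factor $u$ against the coefficient difference), the same triangle inequality through $\mathcal P_K u$, absorption of $\|\nabla e_K\|_{L^2}^2$ into the diffusion via a small-parameter Young inequality, and Gr\"onwall to finish. The paper obtains the $L^\infty$ bound on $u$ directly from the ``sufficiently smooth'' hypothesis rather than from the Sobolev embedding you invoke, but otherwise the arguments coincide.
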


\begin{proof}
    Let $\eta(t) = \frac{1}{2}\|e_K(t,\cdot)\|^2_{L^2(\Omega)}  = \frac{1}{2}\|u(t,\cdot)- u_K(t,\cdot)\|^2_{L^2(\Omega)} $, then we have 
    \begin{equation}
        \begin{aligned}
    \frac{\partial}{\partial t} \eta(t)
        = & -\int_\Omega \nabla (u-u_K)\cdot (a(u)u - a(\mathcal P_K u_K )u_K )\intd \bx \\
        & + D\int_\Omega (u-u_K)  \Delta (u -  u_K  )\intd \bx \\
        & + \int_\Omega (u-u_K)  (r(u)u - r(\mathcal P_k  u_K ) u_K) \intd \bx  \\
        = &  E_1 + E_2 + E_3
        \end{aligned}
    \end{equation}
Notice that for $I_1$, we have
\[
\begin{aligned}
    E_1 =& -\int_\Omega \nabla(u-u_K) \cdot \left(\left(a(u) - a(\mathcal P_K u_K )\right)u\right) \intd \bx\\
    &- \int_\Omega \nabla(u-u_K) \cdot \left(a(\mathcal P_K u_K )(u  - u_K )\right)\intd \bx = E_{11}+ E_{12}.\\
\end{aligned}
\]
For $E_{11}$, we have 
\[
\begin{aligned}
E_{11} \leq & \|\nabla e_K \|_{L^2} \|\left(a(u) - a(\mathcal P_K u_K )\right) u \|_{L^2}\\
 \leq& \|\nabla e_K \|_{L^2} C_{b,lip}\left\|u\|_{L^\infty}\right\|u - \mathcal P_K u_K \|_{L^2}\\
\leq&  \frac{\epsilon C_{b,lip} \left\|u\right\|_{L^\infty} }{2} \|\nabla e_K \|_{L^2} 
+\frac{C_{b,lip}\|u\|_{L^\infty}}{2\epsilon} \|u - \mathcal P_K u_K\|_{L^2}^2.
\end{aligned}
\]
Assume that $u\in \mathbb H^m$
\[
\begin{aligned}
\|u - \mathcal P_K u_K\|_{L^2} \leq& \|u - \mathcal P_K u\|_{L^2} + \|\mathcal P_K u - \mathcal P_K u_K\|_{L^2}\\
=& \|(I - \mathcal P_K) u\|_{L^2} + \|\mathcal  P_K (u - u_K)\|_{L^2} \\
\leq &\|(I - \mathcal P_K) u\|_{L^2} + \|e_K\|_{L^2}\\
\leq & C K^{-2m} \|u\|_{H^m}^2 + \|e_K\|_{L^2}.
\end{aligned}
\]
For $E_{12}$, we have 
\[
\begin{aligned}
    E_{12} \leq& \left| \int_\Omega \nabla(u-u_K) \cdot \left(a(\mathcal P_K u_K )(u  - u_K )\right)\intd \bx \right|\\
    \leq& \|\nabla e_K\|_{L^2}\left\|a(\mathcal P_K u_K )(u  - u_K ) \right\|_{L^2} \\
    \leq& C_{b,lip} \|\nabla e_K\|_{L^2}\left\|e_K \right\|_{L^2} \\
    \leq& \frac{\epsilon C_{b,lip}}{2}\|\nabla e_K\|_{L^2} +\frac{C_{b,lip}}{2\epsilon}\|e_K\|_{L^2}.
\end{aligned}
\]
Add two term together we have 
\[
E_1 \leq \frac{\epsilon C_1}{2}\|\nabla e_K\|_{L^2} +\frac{C_1}{2\epsilon}\| e_K\|_{L^2} + C_1 K^{-2m}\|u\|^2{\mathbb H^m},
\]
where $C_1$  depends on $ \|a\|_{L^\infty},\left\|a'\right\|_{L^\infty},\|u\|_{L^\infty}$.
For $E_2$, we have 
\[
E_2 \leq - D\int_\Omega (\nabla e_K )^2\intd \bx \leq -D\|\nabla e_K\|^2_{L^2}.
\]
For $E_3$, we have 
\[
\begin{aligned}
    E_3 = &   \int_\Omega (u-u_K)  (r(u) - r(\mathcal P_K  u_K )) u \intd \bx 
     +  \int_\Omega (u-u_K)  r(\mathcal P_K  u_K ) (u-u_K)  ) \intd \bx \\
    \leq & \|e_K\|_{L^2} C_{b,lip}\|u-\mathcal P_K u_K\|_{L^2 }
     +C_{b,lip} \|e_K\|_{L^2}^2\\
    \leq & \frac{1}{2}C_{b,lip} \|e_K\|_{L^2} ^2 + \frac{1}{2}C_{b,lip}( C K^{-2m} \|u\|_{H^m}^2 + \|e_K\|_{L^2})
     +C_{b,lip} \|e_K\|_{L^2}^2\\
    = & 2C_{b,lip}\|e_K\|_{L^2}^2 + \frac{ C K^{-2m}}{2}C_{b,lip}\|u\|_{H^m}^2\\
    = & C_3 \|e_K\|_{L^2}^2+ C_3 K^{-2m} \|u\|_{H^m}^2.
\end{aligned}
\]
where $C_3$ depends on $\|r'\|_{L^\infty}$ and $\|r\|_{L^\infty}$.

By choose $\epsilon$ small enough compared with $D$, we can have
\[
\frac{\partial }{\partial t }\eta \leq(C_1+C_2)\eta(t) + (C_1+C_3) K^{-2m} \|u\|_{H^m}^2.
\]
Applying the Gr\"onwall inequility yields
$$\eta(t) \le \eta(0)e^{(C_1+C_2)t} + \int_0^t e^{(C_1+C_2)(t-s)} (C_1+C_3) K^{-2m} \|u(s)\|_{H^m}^2 \intd s.$$
Therefore, we have
$$\|e_K(t)\|_{L^2}^2 \le C_T \Big( \|e_K(0)\|_{L^2}^2 + K^{-2m} \|u\|_{L^2(0,t;H^m)}^2 \Big).$$
\end{proof}

By Proposition \ref{prop:u_K_mu} and \ref{prop:u_u_K}, if we choose $u_K = u$ at $t=0$ and the initial measure $\mu^N$ is an unbiased sampling of $u_0(x)$, i.e., $\mathbb{E} E_K(0) \le  C_s/N$, we have 
\begin{equation}
\mathbb{E}\Vert u(t) - \mu^N(t)\Vert_{H^{-s}}^2 \le C_T \left(C_s/N + K^{-2m} \|u\|_{L^2(0,t;H^m)}^2 \right).     
\end{equation}
The numerical details to implement this step is summarized in Algorithm \ref{alg:linear}.

\begin{algorithm}
    \caption{Stochastic Branching for linear scalar reaction--diffusion--advection equation} \label{alg:linear}
    \begin{algorithmic}
        \Require Initial condition of PDE $u_0(\bx)$, domain descriptor $\Omega$;
        \Require Fixed initial population size $N$
        \Require Time step $\tau>0$, horizon $T$; truncation $\mathcal M_K$; basis $\{\psi_m\}_{m\in\mathcal M_K}$
        \Require Driven term $b(\bx)$
        \Require Diffusion strength $\sigma(\bx)$
        \Require Reaction maps $r(\bx)$
        \Ensure  Solution set of PDE $\mathcal U = \{u_n\}_{n=0}^{T/\tau}$
        \Procedure{Stochastic\_Branching}{$u_0$,$N$,$\tau$,$\mathcal M_K$,$\{\psi_m\}$,$b$,$\sigma$,$r$}
        \State \textbf{Initialization}
        \State $n\gets 0$
        \State Precomputed normalization constants  $Z\gets \int_\Omega u_0(\bx)\intd \bx$  (either numerically or analytically).
        \State Generate $N$ initial particle positions set $S_0=\{\bX_{i,0}\}$ according to the distributions $\rho_0=\frac{u_0}{Z}$ and using the Metropolis--Hastings algorithm.
        \State \textbf{Interpolate (fixed-$N$):}
        \State $\tilde\rho_0 \gets \textsc{Interpolate}(S_0,N,\mathcal M_K,\{\psi_m\})$
        \State Append $\tilde u_0 = Z\tilde \rho_0$ into $\mathcal U$
        \While {$n\tau<T$}
            \State \textbf{SDE propagation:}
            \State $S_n^*\gets \textsc{SDE\_Propagate}(S_n,\tau,b,\sigma)$
            \State \textbf{Birth--Death:}
            \State $S_{n+1}\gets \textsc{Birth\_Death}(S_{n},\tau,r)$
            \State \textbf{refresh fields:}
            \State $\tilde\rho_{n+1} \gets \textsc{Interpolate}(S_{n+1},N,\mathcal M_K,\{\psi_m\})$
            \State Append $\tilde u_{n+1} = Z\tilde \rho_{n+1}$ into $\mathcal U$
            \State $n\gets n+1$
        \EndWhile
        \State \Return $\mathcal U$
        \EndProcedure
    \end{algorithmic}
\end{algorithm}

\begin{remark}
Propositions \ref{prop:u_K_mu} and \ref{prop:u_u_K} present the numerical convergence analysis for a scalar ADR equation. Extension to the two-variable KS system requires additional regularity assumptions of  the chemotactic drift, at least up to a stopping time prior to blow-up. A full two-field analysis is beyond the scope of this work and is left for future study.    
\end{remark}

\section{Extension to PDE system}
\label{sec:extension}
To solve PDE systems like Equation \eqref{equ:KS}, we introduce two sets of particles $\bX_i(t)$ and $\bY_i(t)$. Therefore, empirical distribution $\rho_u(t,\intd \bx ) = \mu_u(t,\intd \bx) =  \frac{1}{N_{u,0}}\sum_{i=1}^{N_{u,n}}\delta_{\bX_n^i} $ and $\rho_v(t,\intd \bx ) = \mu_v(t,\intd \bx) =  \frac{1}{N_{v,0}}\sum_{i=1}^{N_{v,n}}\delta_{\bY_n^i}$ are used to approximate $u$ and $v$ separately, by 
\begin{equation}
    u(t,\bx) =  Z_u \rho_u(t,\bx), \quad v(t,\bx) =  Z_v \rho_v(t,\bx),
\end{equation}
where $Z_u = \int_\Omega u(0,\bx) \intd \bx$ and $Z_v = \int_\Omega v(0,\bx) \intd \bx$. Therefore, the dynamics of \eqref{equ:KS} can be written as 
\begin{equation}
    \begin{aligned}
        \partial_t \rho_u =  \mathcal A_u \rho_u + \mathcal B_u \rho_u,\\
        \partial_t \rho_v =  \mathcal A_v \rho_v + \mathcal B_v \rho_v
    \end{aligned}
\end{equation}
where $\mathcal A_u \rho = \nabla \cdot ( \nabla \rho - \frac{Z_v}{Z_u} \chi(Z_u\rho) \nabla \rho_v)$ , $\mathcal A_v \rho = \Delta \rho$, $\mathcal B_u \rho_u  = f_u(Z_u\rho_u,Z_v \rho_v)$ and $B_v \rho_v = f_v(Z_u\rho_u,Z_v \rho_v)$. Similarly, we can approximate the solution by operator splitting, i.e., 
\begin{equation}
    \begin{aligned}
        \rho_{u,n}^*  =  \exp(\mathcal A_u\tau) \rho_{u,n}, \quad 
         \rho_{v,n}^*  =  \exp(\mathcal A_v\tau) \rho_{v,n},  \\
         \rho_{u,n+1} =  \exp(\mathcal B_u\tau) \rho_{u,n}^*,  \quad 
         \rho_{v,n+1}  =  \exp(\mathcal B_v\tau) \rho_{v,n}^*,   \\
    \end{aligned}
\end{equation}
where by matching parameters, we have $a_u(\bx;\rho_u,\rho_v) =  - \frac{Z_v}{Z_u\rho_u} \chi(Z_u\rho_u) \nabla \rho_v $ and $a_v(\bx;\rho_u,\rho_v)= 0$. For the diffusion part, we have $\sigma_u(\bx;\rho_u) = \sigma_v(\bx;\rho_v) = 1$. For birth and death part, $\tilde r_u (\bx;\rho_u,\rho_v)=  \frac{f_u(Z_u\rho_u,Z_v \rho_v)}{\rho_u}  $ and $\tilde r_v (\bx;\rho_u,\rho_v )=  \frac{f_v(Z_u\rho_u,Z_v \rho_v)}{\rho_v}  $. The detailed algorithm is shown in  Algorithm \ref{alg:ks}.

\begin{algorithm}
\caption{Stochastic Branching for keller--Segel equation}\label{alg:ks}
\begin{algorithmic}
\Require Initial PDE data $u_0(\bx),\,v_0(\bx)$; domain descriptor $\Omega$
\Require Fixed initial population sizes $N_u, N_v$
\Require Time step $\tau>0$, horizon $T$; truncation $\mathcal M_K$; basis $\{\psi_m\}_{m\in\mathcal M_K}$
\Require Normalization constants $Z_u=\int_\Omega u_0\,\mathrm d\bx$, $Z_v=\int_\Omega v_0\,\mathrm d\bx$
\Ensure Discrete KS fields $\mathcal U=\{\tilde u_n\}_{n=0}^{T/\tau}$, $\mathcal V=\{\tilde v_n\}_{n=0}^{T/\tau}$
\Procedure{KS\_Stochastic\_Branching}{$u_0,v_0,N_u,N_v,\tau,\mathcal M_K,\{\psi_m\}$}
  \State \textbf{Initialization}
  \State $n\gets 0$
  \State Generate $N_u$,$N_v$ initial particle positions $S_{u,0}=\{\mathbf X^i_0\}$, $S_{v,0}=\{\mathbf Y^i_0\}$ via MH from $\rho_{u,0}=u_0/Z_u$.
  \State \textbf{Interpolate (fixed-$N$):}
  \State $\tilde\rho_{u,0} \gets \textsc{Interpolate}\big(S_{u,0},N_u,\mathcal M_K,\{\psi_m\}\big)$
  \State $\tilde\rho_{v,0} \gets \textsc{Interpolate}\big(S_{v,0},N_v,\mathcal M_K,\{\psi_m\}\big)$
  \State Append $\tilde u_0 = Z_u \tilde\rho_{u,0}$ to $\mathcal U$;\quad Append $\tilde v_0 = Z_v \tilde\rho_{v,0}$ to $\mathcal V$
  \While{$n\tau<T$}
    \State \textbf{SDE propagation}
    \State $b_u(\bx) \gets -\dfrac{Z_v}{Z_u\,\tilde\rho_{u,n}(\bx)}\,\chi\!\big(Z_u\tilde\rho_{u,n}(\bx)\big)\,\nabla\tilde\rho_{v,n}(\bx)$; $b_v(\bx) \gets 0$
    \State $S_{u,n,*} \gets \textsc{SDE\_Propagate}\big(S_{u,n},\tau,b_u,\sigma_u\big)$
    \State $S_{v,n,*} \gets \textsc{SDE\_Propagate}\big(S_{v,n},\tau,b_v,\sigma_v\big)$
    \State \textbf{Interpolate at starred positions (fixed-$N$)}
    \State $\tilde\rho_u^* \gets \textsc{Interpolate}\big(S_{u,n,*},N_u,\mathcal M_K,\{\psi_m\}\big)$
    \State $\tilde\rho_v^* \gets \textsc{Interpolate}\big(S_{v,n,*},N_v,\mathcal M_K,\{\psi_m\}\big)$
    \State \textbf{Birth--Death}
    \State $r_u(\bx) \gets \dfrac{f_u\!\big(Z_u\tilde\rho_u^*(\bx),\,Z_v\tilde\rho_v^*(\bx)\big)}{\tilde\rho_u^*(\bx)}$; $r_v(\bx) \gets \dfrac{f_v\!\big(Z_u\tilde\rho_u^*(\bx),\,Z_v\tilde\rho_v^*(\bx)\big)}{\tilde\rho_v^*(\bx)}$
    \State $S_{u,n+1} \gets \textsc{Birth\_Death}\big(S_{u,n,*},\tau,r_u\big)$
    \State $S_{v,n+1} \gets \textsc{Birth\_Death}\big(S_{v,n,*},\tau,r_v\big)$
    \State \textbf{Refresh fields for output (fixed-$N$)}
    \State $\tilde\rho_{u,n+1} \gets \textsc{Interpolate}\big(S_{u,n+1},N_u,\mathcal M_K,\{\psi_m\}\big)$
    \State $\tilde\rho_{v,n+1} \gets \textsc{Interpolate}\big(S_{v,n+1},N_v,\mathcal M_K,\{\psi_m\}\big)$
    \State Append $\tilde u_{n+1}=Z_u\tilde\rho_{u,n+1}$ to $\mathcal U$;\quad Append $\tilde v_{n+1}=Z_v\tilde\rho_{v,n+1}$ to $\mathcal V$
    \State $n\gets n+1$
  \EndWhile
  \State \Return $\mathcal U,\ \mathcal V$
\EndProcedure
\end{algorithmic}
\end{algorithm}

\section{Numerical Examples}
In this section, we present numerical examples in two-dimensional space to verify the accuracy of our method.

\textbf{Case 1:} We first consider the AC equation, a widely used model for phase separation dynamics, to verify the accuracy of our scheme in handling nonlinear reaction systems. The governing equation is:
\begin{equation} \label{eq:ac_pde}
    u_t = D\Delta u + u - u^3, \quad \bx \in \Omega, t > 0,
\end{equation}
with the diffusion coefficient $D=0.01$. The computational domain is set to $\Omega = [0, 2\pi]^2$ with periodic boundary conditions imposed on all boundaries. 

The initial condition is given by:
\begin{equation} \label{eq:ac_ic}
    u_0(\bx) = \sin^2(x_1) \cos^2(x_2), \quad \bx = (x_1, x_2) \in \Omega.
\end{equation}
We solve the system numerically using our proposed method with Fourier projection $K=10$ and number of particle $N=400000$ up to a final time $T=0.8$. 
Figure~\ref{fig:ac_solution} shows the snapshots of the numerical solution $\tilde{u}$ at different time instances and its comparison with the finite difference method.
\begin{figure}
    \centering
    \includegraphics[width=0.8\linewidth]{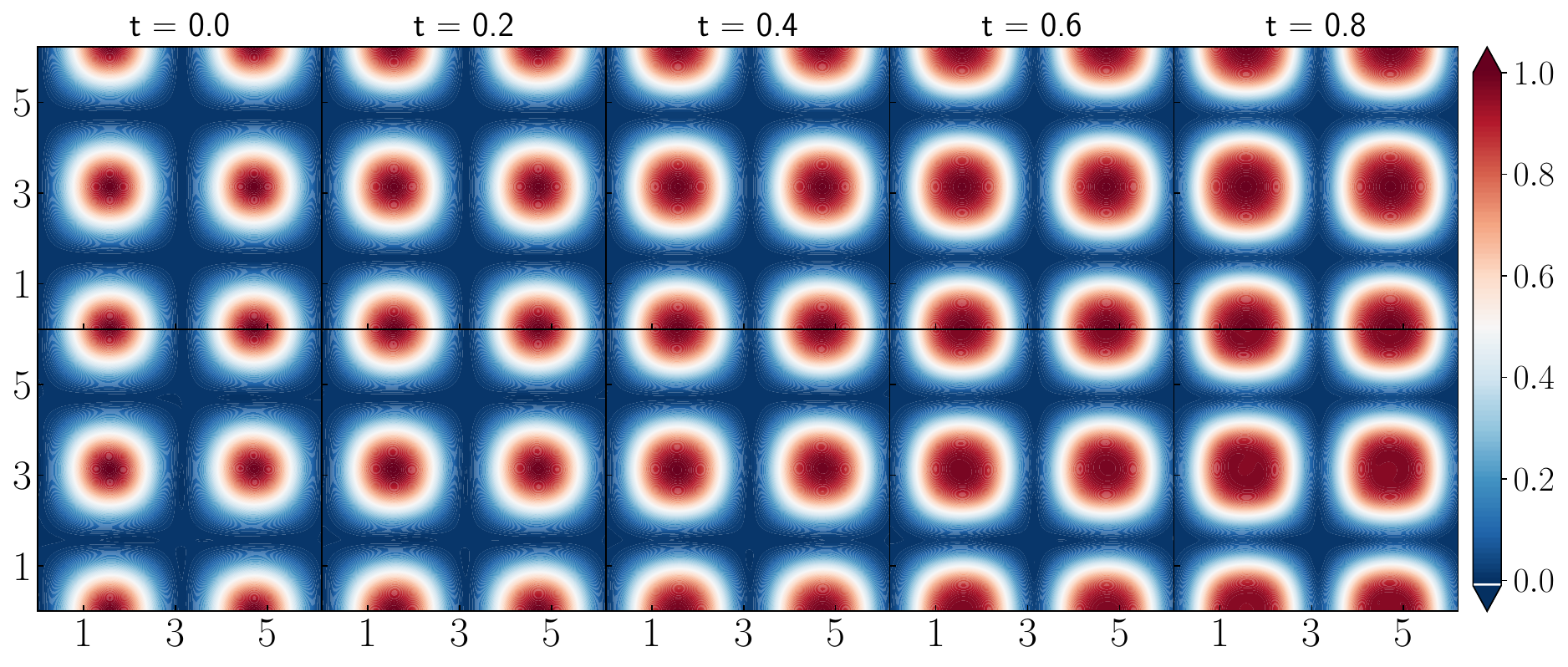}
    \caption{Numerical solution of the AC equation \eqref{eq:ac_pde} at different time snapshots ($T=0, 0.2, 0.4, 0.6, 0.8$). The top row displays the reference solution computed by a finite difference method, while the bottom row shows the results from our proposed particle method.}
    \label{fig:ac_solution}
\end{figure}

\textbf{Case 2:} Next, we consider the following KS equation in a periodic domain $\Omega = [0, 2\pi] \times [0, 2\pi]$:
\begin{equation} \label{equ:case1}
\begin{aligned}
&u_t - \nabla \cdot \left(\nabla u - u \nabla v \right) = 0  , &\quad \bx \in \Omega , t > 0, \\
&v_t - \Delta v = u - v , & \quad \bx \in \Omega, t > 0.
\end{aligned}
\end{equation}
The system is subject to periodic boundary conditions, with initial conditions given by:
$$
u_0 (\bx) =  \sin^2(\bx_1) \cos^2(\bx_2), \quad v_0(\bx) =  \cos(\bx_1) + \cos(\bx_2) + 2.
$$
The temporal evolution of the total mass of $v$ is characterized by
$$
M(t) = \int v(t,\bx) \, \intd \bx.
$$
It can be shown that the rate of change of the mass is given by
$$
\dot{M}(t) = -M(t) + \int u(x,0) \, dx = -M(t) + \pi^2.
$$
We present the numerical results for $M(t)$, comparing our method with the exact solution in Figure~\ref{fig:total_mass}.

\begin{figure}[h]
\centering
\includegraphics[width=0.8\linewidth]{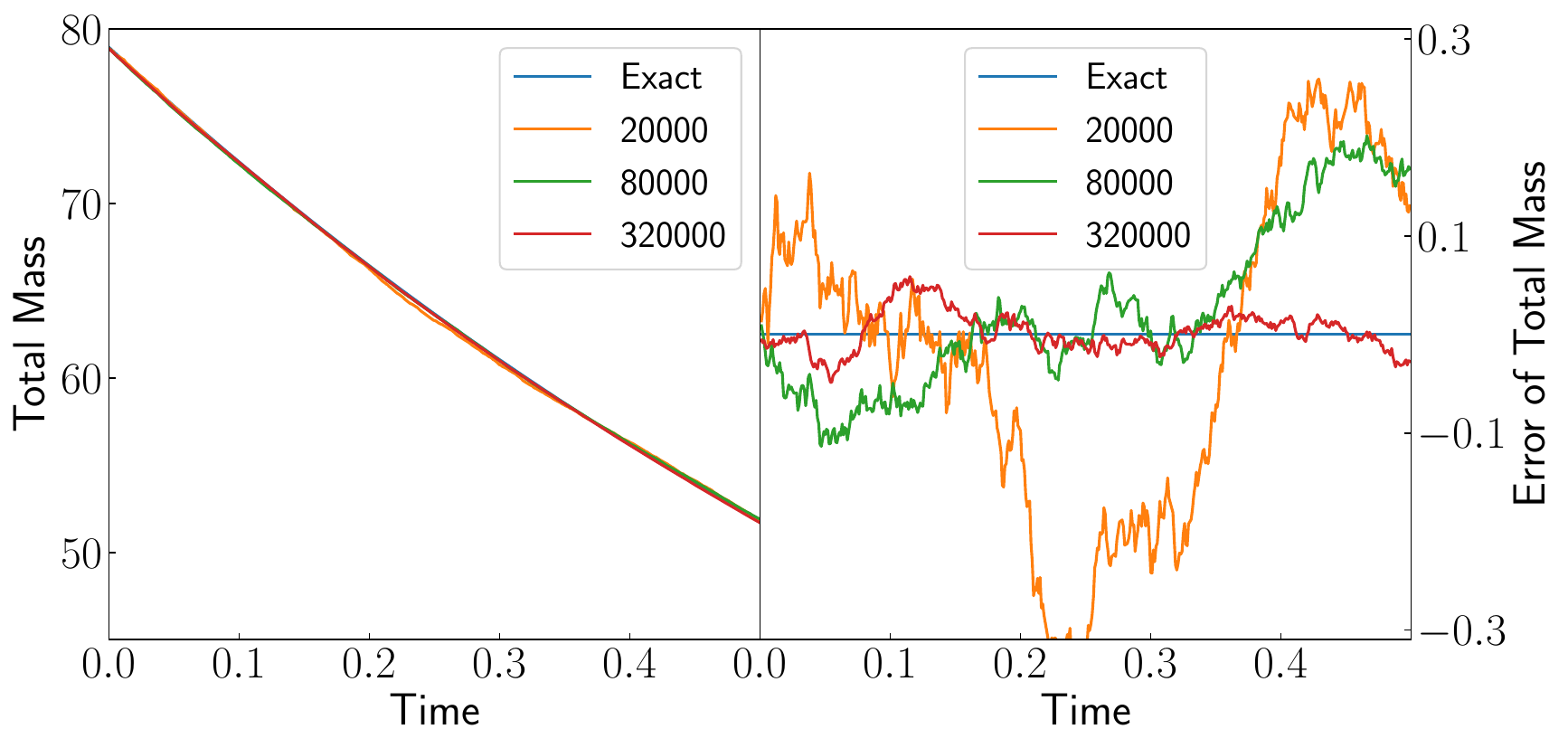}
\caption{Time evolution of the total mass of $v$ for different numbers of particles. The left panel shows the total mass over time, while the right panel displays the corresponding error in the total mass.}
\label{fig:total_mass}
\end{figure}

Additionally, we compare our particle-based method with finite difference method with a grid of $100 \times 100$ points. The result for $u(t)$ and $v(t)$ at $t = 0.0, 0.1, 0.2, 0.3, 0.4$ are shown in Figures~\ref{fig:fd_comparison_u} and~\ref{fig:fd_comparison_v}, respectively. Figure~\ref{fig:convergence_error} illustrates the reduction in relative $L^2$ error as the number of samples increases, showing a clear $1/\sqrt{N}$ convergence rate.

\begin{figure}[h]
\centering
\includegraphics[width=\linewidth]{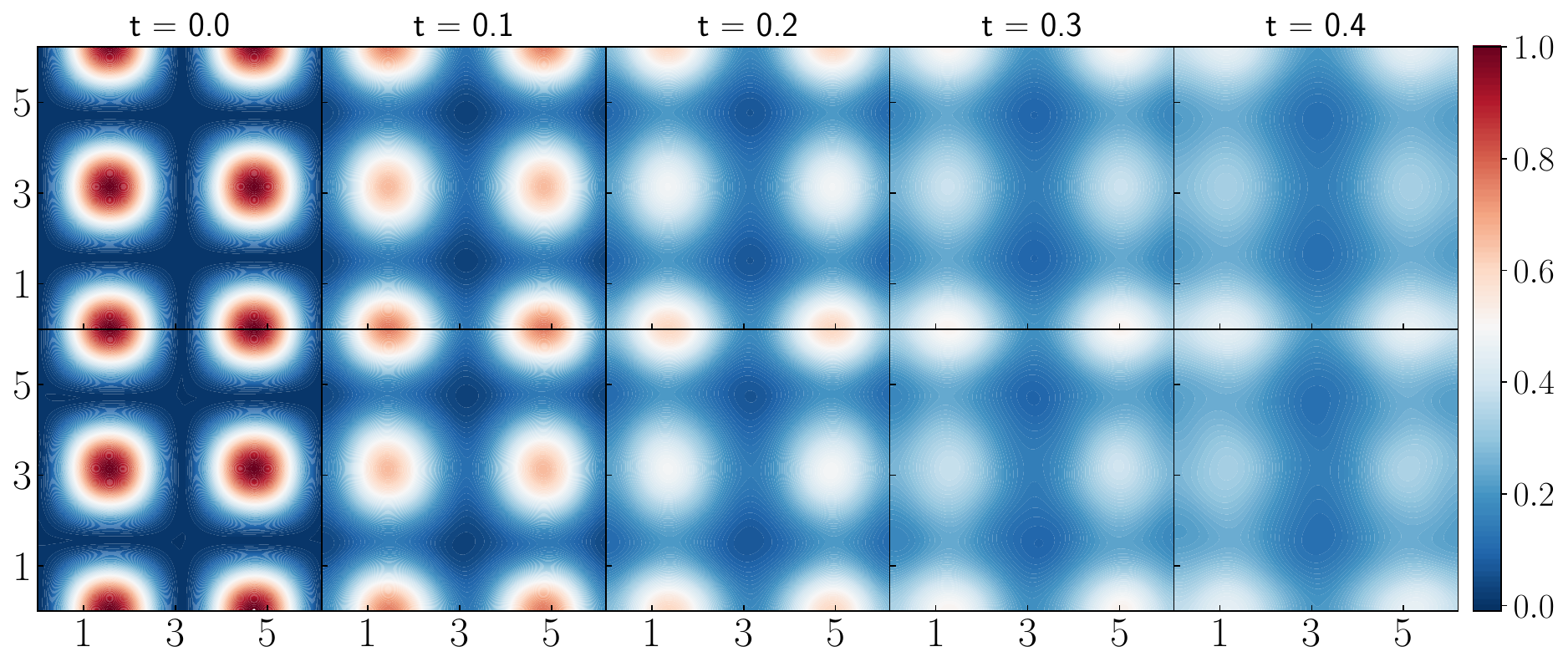}
\caption{Comparison of the numerical approximation of $u$ obtained from the finite difference method with a $100 \times 100$ grid and the present particle-based method at $t = 0.0, 0.1, 0.2, 0.3, 0.4$.}
\label{fig:fd_comparison_u}
\end{figure}

\begin{figure}[h]
\centering
\includegraphics[width=\linewidth]{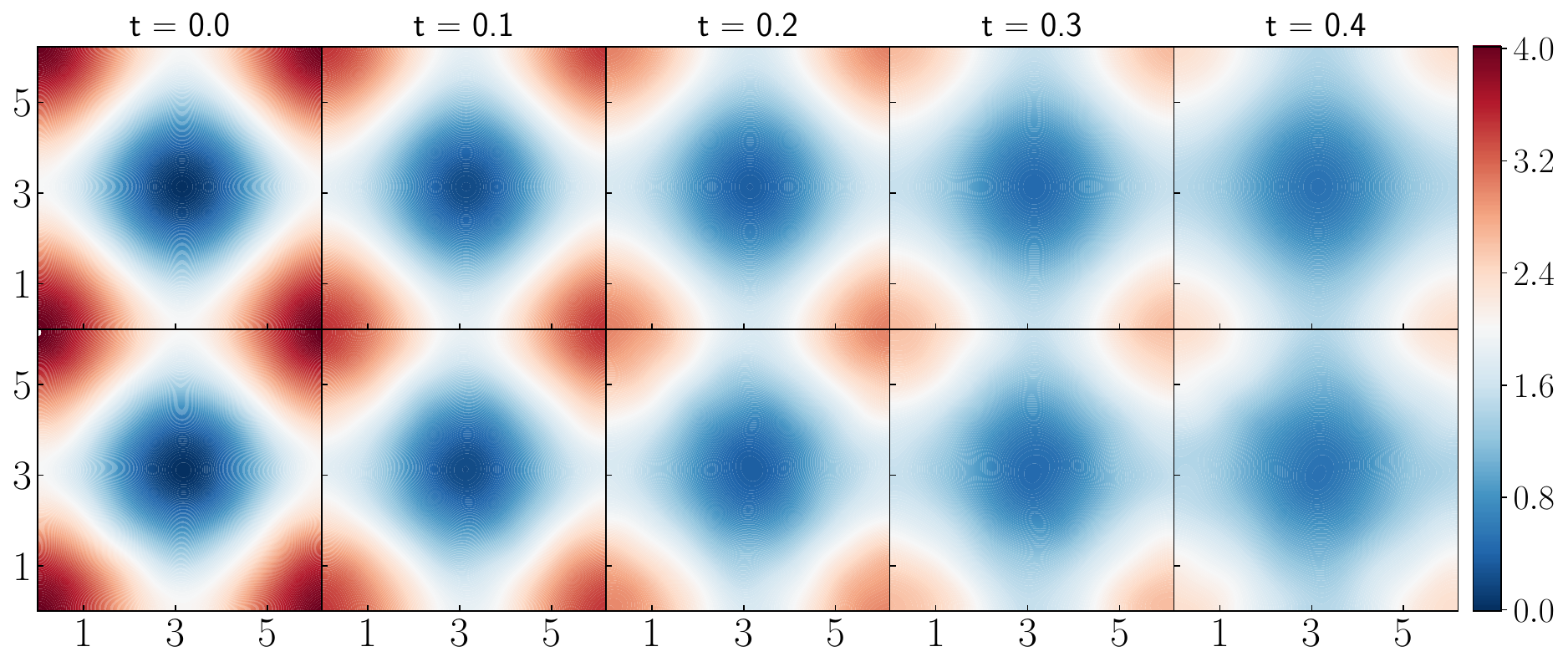}
\caption{Comparison of the numerical approximation of $v$ obtained from the finite difference method with a $100 \times 100$ grid and the present particle-based method at $t = 0.0, 0.1, 0.2, 0.3, 0.4$.}
\label{fig:fd_comparison_v}
\end{figure}

\begin{figure}[h]
\centering
\includegraphics[width=0.8\linewidth]{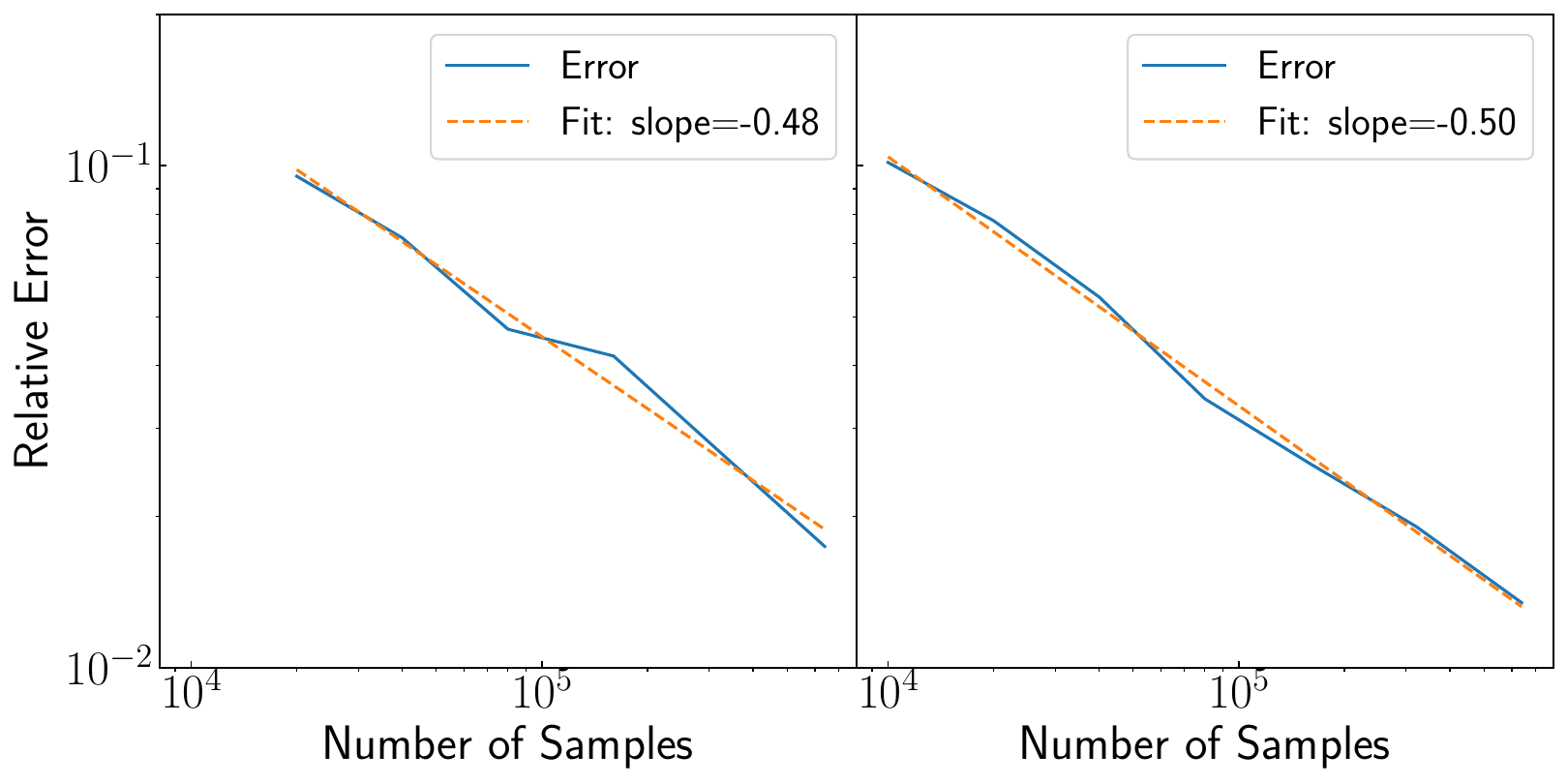}
\caption{Relative $L^2$ error of $u$ and $v$ with respect to the number of samples $N_0$. The reference solution is obtained with $N_0 = 320,000$.}
\label{fig:convergence_error}
\end{figure}

\textbf{Case 3:} Furthermore, we solve the system \eqref{equ:case1} with the initial conditions:
$$
u_0(\bx) = 840 \exp\left(-84 (\bx_1^2 + \bx_2^2)\right), \quad v_0(\bx) = 420 \exp\left(-42(\bx_1^2 + \bx_2^2)\right).
$$
Figure~\ref{fig:case2_u} shows the numerical solution $u$, which undergoes a blow-up phenomenon that is generally difficult to capture using the standard grid discretization. To assess the performance of our method, we compare its results against a finite difference simulation. Three resolution levels are analyzed: the particle method with $N = 40,000$, $160,000$, and $640,000$ is compared to the finite difference method with $200 \times 200$, $400 \times 400$, and $800 \times 800$ mesh points. 
Figure \ref{fig:comparison} shows the numerical solution at $t=5 \times 10^{-5}$. The present particle method enables us to accurately capture the aggregation dynamics and is robust with respect to the number of particles. In contrast, the solution obtained from the finite difference method exhibits pronounced mesh size sensitivity near the aggregation. While the performance of the grid-based approach can be improved by applying more sophisticated spatial discretization such as local discontinuous Galerkin method~\cite{li2017local}, mesh refinement is often required to track the aggregation. On the other hand, the present particle method is natural for modeling such strong nonlinear behaviors without using the adaptive mesh refinement.


\begin{figure}
\centering
\includegraphics[width=0.8\linewidth]{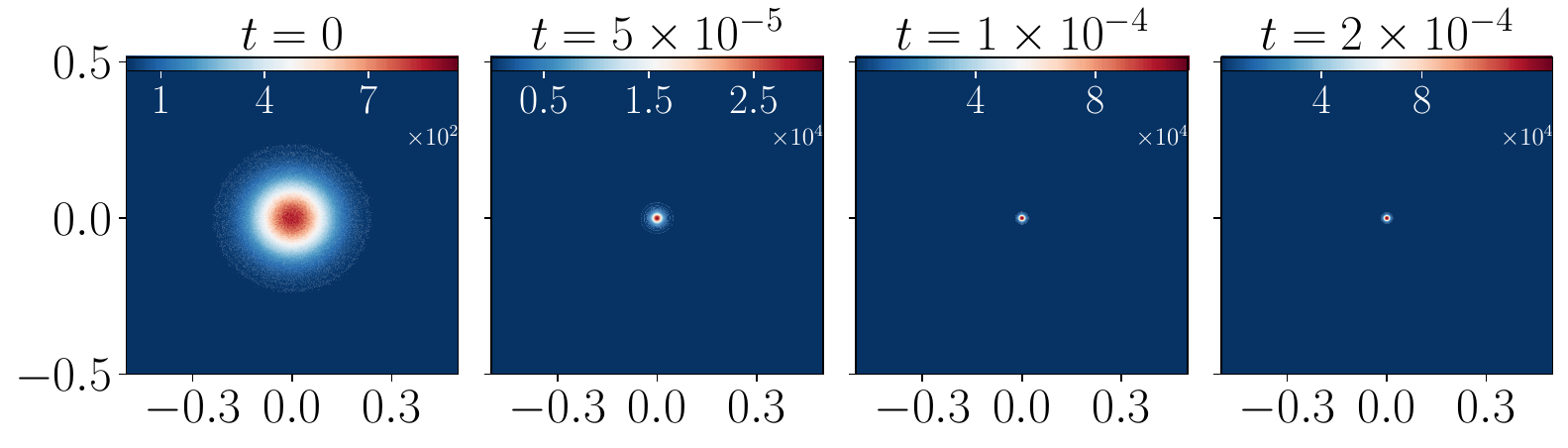}
\caption{Numerical approximations of $u$ at $t = 0, 5 \times 10^{-5}, 10^{-4}$, and $1.5 \times 10^{-4}$.}
\label{fig:case2_u}
\end{figure}
\begin{figure}
\centering
\includegraphics[width=0.85\linewidth]{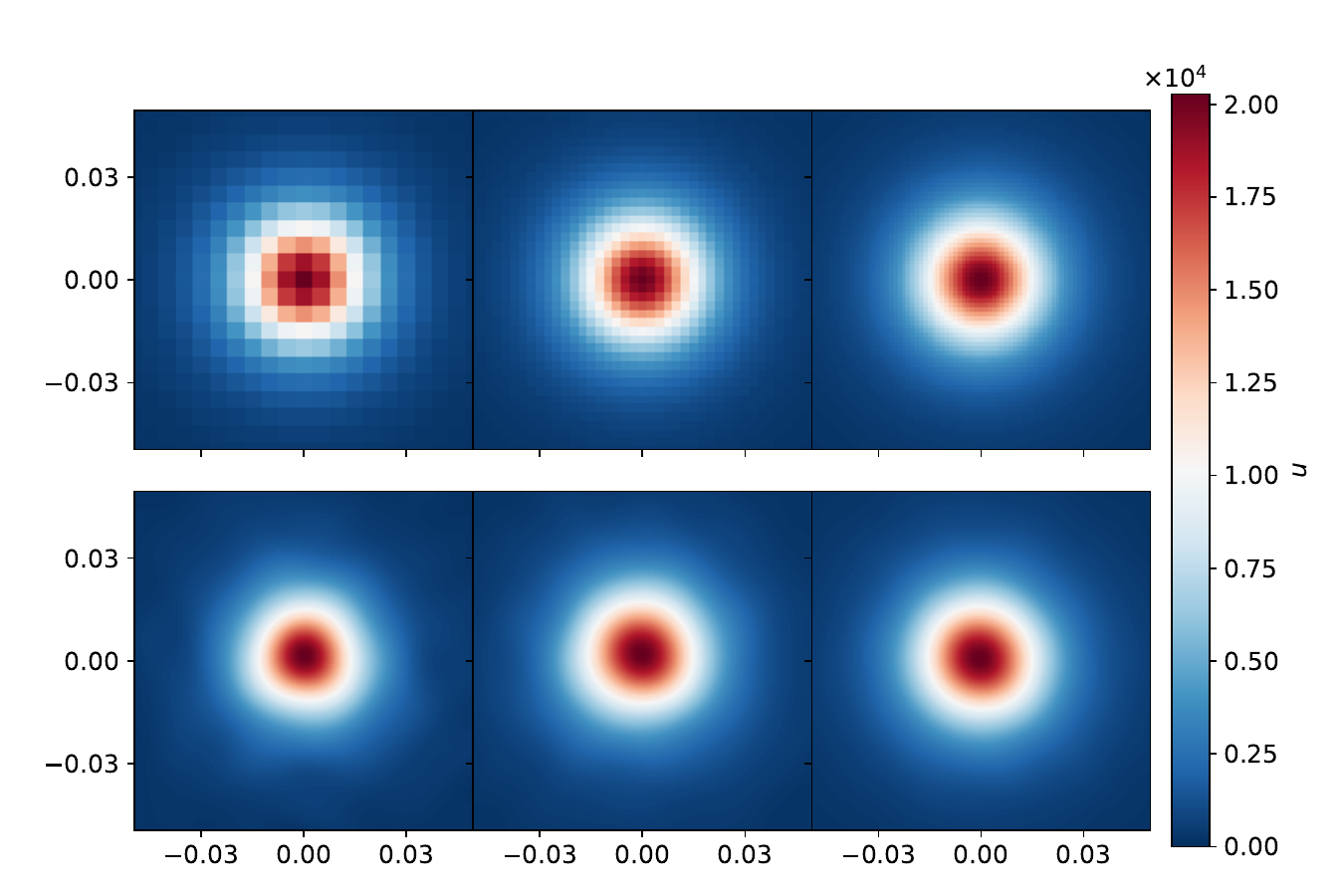}
\caption{Comparison of the numerical solution $u$ from the finite difference method and the present particle method at $t=5\times 10^{-5}$.}
\label{fig:comparison}
\end{figure}

\textbf{Case 4:} Finally, we examine a fully nonlinear case with logistic growth for the cells, where the interaction between the particles is not known a priori. The system is given by:
\begin{equation}
\label{equ:nonliear}
\begin{aligned}
&u_t - \nabla \cdot \left(\nabla u - \frac{4u}{1 + u^2} \nabla v \right) = u(1 - u), & \quad \bx \in \Omega, t > 0, \\
&v_t - \Delta v = u - v, & \quad \bx \in \Omega, t > 0.
\end{aligned}
\end{equation}
The theoretical results of this model can be found in \cite{osaki2002exponential, wang2007classical, wrzosek2004global}. The initial conditions are specified as:
$$
    \begin{aligned}
        u_0(\bx) =&  \exp\left( (\bx_1 - 0.5\pi)^2 + (\bx_2 - 0.5\pi)^2 \right) + \exp\left( (\bx_1 - \pi)^2 + (\bx_2 - \pi)^2 \right) \\
        &+ \exp\left( (\bx_1 - 1.5\pi)^2 + (\bx_2 - 1.5\pi)^2 \right), \\
        v_0(\bx) =&  \exp\left( (\bx_1 - 0.4\pi)^2 + (\bx_2 - 0.4\pi)^2 \right) + \exp\left( (\bx_1 - 0.8\pi)^2 + (\bx_2 - 0.8\pi)^2 \right) \\
        &+ \exp\left( (\bx_1 - 1.2\pi)^2 + (\bx_2 - 1.2\pi)^2 \right) + \exp\left( (\bx_1 - 1.6\pi)^2 + (\bx_2 - 1.6\pi)^2 \right).
    \end{aligned}
$$

With the non-conservative form, both the birth and death of cells, as well as the chemical concentrations $f_u$ and $f_v$, are non-zero. The particle method proposed in \cite{wang2025novel} only accounts for advection and diffusion and therefore has limited capability in addressing these effects.  In contrast, the present method can naturally handle these additional complexities in the non-conservative form. 
Figures~\ref{fig:case3_u} and~\ref{fig:case3_v} show the time evolution of the solutions $u$ and $v$ obtained using both the particle-based and the finite difference methods.

\begin{figure}
\centering
\includegraphics[width=0.8\linewidth]{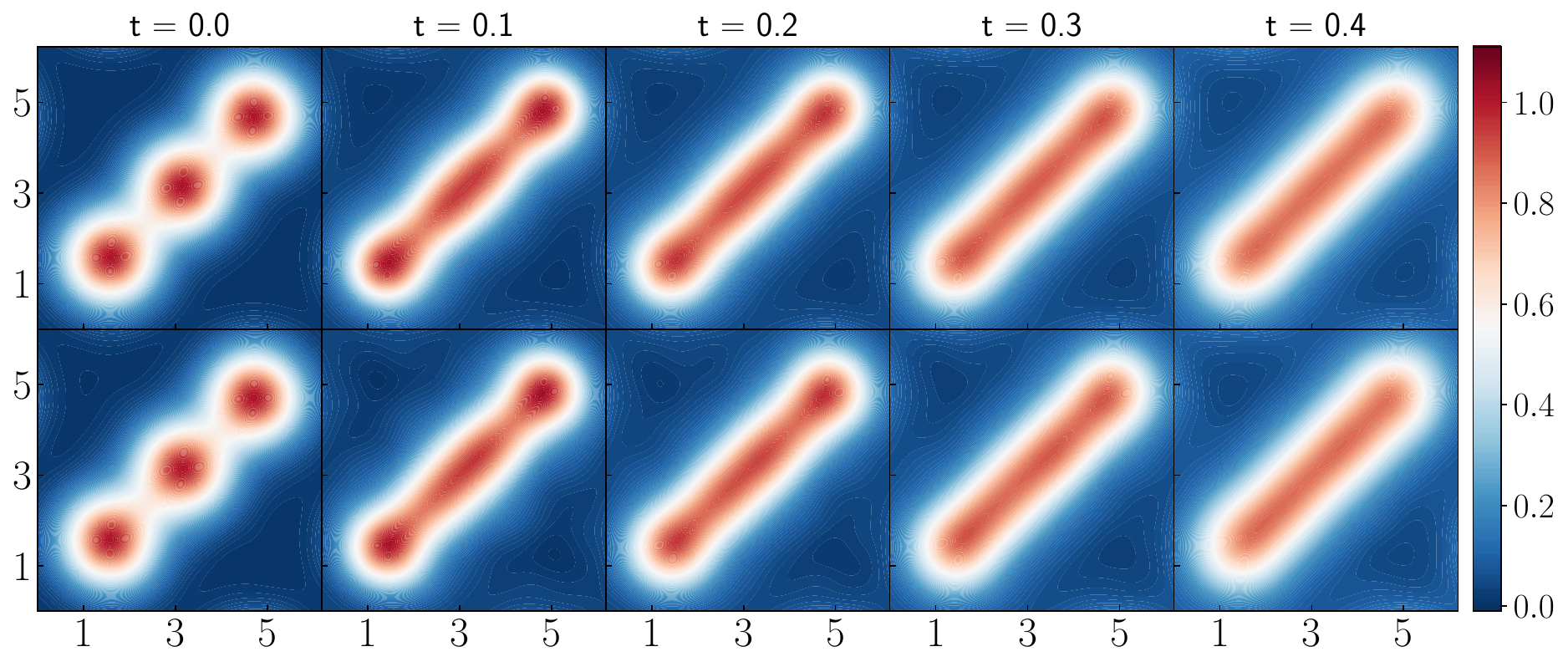}
\caption{Time evolution of the numerical solution $u$ computed using the particle-based and the finite difference methods.}
\label{fig:case3_u}
\end{figure}

\begin{figure}
\centering
\includegraphics[width=0.8\linewidth]{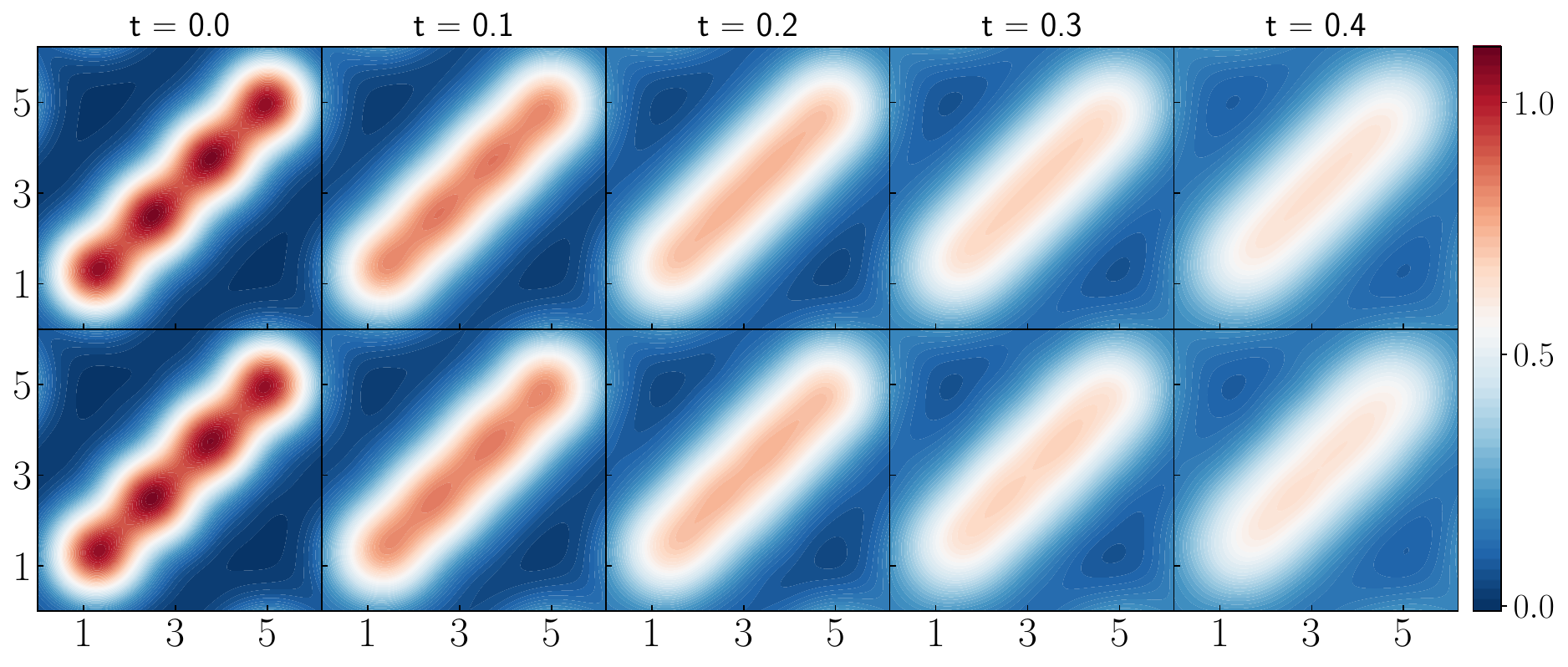}
\caption{Time evolution of the numerical solution $v$ computed using the particle-based and the finite difference methods.}
\label{fig:case3_v}
\end{figure}

\section{Discussion}
This work presents a stochastic particle-based approach for solving nonlinear non-conservative advection–diffusion–reaction equations. 
The evolution is split into two steps: an advection–diffusion step implemented by particle dynamics, and a local reaction step represented by a stochastic branching birth–death mechanism that provides a consistent temporal discretization of the underlying reaction dynamics. 
Unlike traditional particle or Lagrangian schemes, this formulation enables the present method to naturally capture the non-conservative evolution without an explicit form of the microscopic physical law. The resulting scheme preserves nonnegativity, captures concentration and blow-up robustly, and achieves an unbiased expectation with $O(1/N)$ Monte Carlo variance. Importantly, the algorithm is inherently parallel and does not require grid-based discretization.
Numerical results of the AC and the KS equations show the generality and robustness. In particular, the present method accurately captures the nonlinear behaviors such as aggregation and mass evolution of the KS equation without requiring adaptive mesh refinement, and is applicable to a broad class of transport and interaction problems. 

Looking forward, the current work lays the foundation for several promising extensions. From a numerical perspective, the present particle-based formulation is particularly well-suited for high-dimensional PDE problems, where grid-based methods often show limitions. This calls for efficient interpolation of empirical particle measures. In particular, the low-rank spectral approximations and adaptive reconstruction techniques offer promising directions ~\cite{tang2025solving, Peng_Yang_tensor_arxiv_2024}. Additionally, the present method can be extended for more general boundary conditions such as Dirichlet, Neumann, and Robin types, where proper stochastic killing, reflection, or mixed rules need to be developed at the particle level. Finally, while we provide convergence results for the scalar case, theoretical analysis of fully coupled multi-species systems requires further investigation. We leave these for future studies. 



\section*{Acknowledgments}
The work is supported in part by the National Science Foundation under Grant DMS-2143739 and the ACCESS program through allocation MTH210005. 
The authors also acknowledge the support from the Institute for Cyber-Enabled Research at Michigan State University.







\end{document}